\theoremstyle{definition}
\newtheorem{definition}{Definition}[section]
\newtheorem{example}[definition]{Example}
\newtheorem{notation}[definition]{Notation}
\newtheorem{remark}[definition]{Remark}
\theoremstyle{plain}
\newtheorem{lemma}[definition]{Lemma}
\newtheorem{proposition}[definition]{Proposition}
\newtheorem{theorem}[definition]{Theorem}
\newcommand{\tass}{\mathcal{T\!A}}
\newcommand{\pass}{\mathcal{P\!A}}
\def\l@section{\@tocline{1}{0pt}{1pc}{}{}}
\def\l@subsection{\@tocline{2}{0pt}{1pc}{4.6em}{}}
\def\l@subsubsection{\@tocline{3}{0pt}{1pc}{7.6em}{}}
\renewcommand{\tocsection}[3]{%
\indentlabel{\@ifnotempty{#2}{\makebox[1.25em][l]{\ignorespaces#1#2.}}}#3}
\renewcommand{\tocsubsection}[3]{%
\indentlabel{\@ifnotempty{#2}{\hspace*{1.25em}\makebox[2.00em][l]{\ignorespaces#1#2.}}}#3}
\renewcommand{\tocsubsubsection}[3]{%
\indentlabel{\@ifnotempty{#2}{\hspace*{3.25em}\makebox[2.75em][l]{\ignorespaces#1#2.}}}#3}
\begin{document}

\title[Operadic cohomology of associative $N$-tuple systems]
{Operadic approach to cohomology of\\associative triple and $N$-tuple systems}

\author{Fatemeh Bagherzadeh}

\address{Department of Mathematics and Statistics, University of Saskatchewan, Canada}

\email{bagherzadeh@math.usask.ca}

\author{Murray Bremner}

\address{Department of Mathematics and Statistics, University of Saskatchewan, Canada}

\email{bremner@math.usask.ca}

\subjclass[2010]{Primary 18D50. Secondary 16E40. 16S37, 17A42.}

% 18D50 - Operads
% 16E40 - (Co)homology of rings and algebras (e.g. Hochschild, cyclic, dihedral, etc.)
% 16S37 - Quadratic and Koszul algebras
% 17A42 - Other $n$-ary compositions $(n \ge 3)$

\keywords{Associative algebras, 
$n$-ary algebras,
nonsymmetric operads, 
Koszul duality, 
cohomology, 
cup product.}

\thanks{The authors were supported by the Discovery Grant \emph{Algebraic Operads}
from NSERC, the Natural Sciences and Engineering Research Council of Canada.
The authors thank Vladimir Dotsenko and Martin Markl for helpful suggestions at 
various stages of this research.}

\begin{abstract}
The cup product in the cohomology of algebras over quadratic operads 
has been studied in the general setting of Koszul duality for operads. 
We study the cup product on the cohomology of $n$-ary totally associative 
algebras with an operation of even (homological) degree.
This cup product endows the cohomology with the structure of an $n$-ary 
partially associative algebra with an operation of even or odd degree
depending on the parity of $n$.
In the cases $n = 3$ and $n = 4$, we provide an explicit definition of 
this cup product and prove its basis properties.
\end{abstract}

\maketitle

\tableofcontents

%%%%%%%%%%%%%%%%%%%%%%%%%%%%%%%%%%%%%%%%%%%%%%%%%%%%%%%%%%%%%%%%%%%%%%%%%%%%%%%%%%%%%%%%%%%%%%%%
%%%%%%%%%%%%%%%%%%%%%%%%%%%%%%%%%%%%%%%%%%%%%%%%%%%%%%%%%%%%%%%%%%%%%%%%%%%%%%%%%%%%%%%%%%%%%%%%
%%%%%%%%%%%%%%%%%%%%%%%%%%%%%%%%%%%%%%%%%%%%%%%%%%%%%%%%%%%%%%%%%%%%%%%%%%%%%%%%%%%%%%%%%%%%%%%%

\section{Introduction}

%%%%%%%%%%%%%%%%%%%%%%%%%%%%%%%%%%%%%%%%%%%%%%%%%%%%%%%%%%%%%%%%%%%%%%%%%%%%%%%%%%%%%%%%%%%%%%%%

\subsection{Historical background}

During the 1940s, topologists discovered the homology theory of topological spaces
and some invariants of algebraic systems \cite{Weibel}.
It was proved that the cohomologies of a spherical space $Y$ and of a Lie group $G$ 
depend only on fundamental group $\pi_1(Y)$ and the corresponding Lie algebra $\mathfrak{g}$.

In 1945, Hochschild \cite{Hochschild} defined the cohomology of an associative algebra $A$ 
over a field $\mathbb{F}$ with coefficients in a bimodule $M$.
His cochain complex  $C^q(A;M)$ of degree $q$ is
the vector space of linear maps $A^{\otimes q} \rightarrow M$.
He showed that $H^1(A;M)$ vanishes
for every $M$ if and only if $A$ is a separable algebra.
Hochschild also showed that $H^2(A;M)$ is directly connected with extensions of $A$.
In particular if $A$ is semisimple then $H^2(A;M)$ vanishes for every $M$;
hence every nilpotent extension of $A$ splits.
Shukla \cite{Shukla} extended Hochschild's results to general coefficient rings.

In 1962, Gerstenhaber \cite{Gerstenhaber} showed that if $A$ is an associative ring 
then $H^*(A;A)$ is a graded ring with commutative cup product denoted $\smile$ and 
furthermore that $H^*(A;M)$ is an $H^*(A;A)$ bimodule.
He also defined a bracket $[-,-]$ on $H^*(A;A)$ that makes it a Lie ring;
the two products are related by the derivation law:
\[
[\alpha^m \smile \beta^n, \gamma^p]= [\alpha^m , \gamma^n] \smile \beta^p 
+ (-1)^{p(m-1)}\alpha^m \smile [\beta^n, \gamma^p]
\]
Graded rings with two operations satisfying these conditions are now called 
Gerstenhaber algebras and play a significant role in theoretical physics \cite{Getzler}.

In 1993, Deligne posed the conjecture that the Hochschild cohomology of an associative algebra
has the structure of an algebra over an operad over the (framed) little disks operad.
Following earlier work by Gerstenhaber \& Voronov \cite{GV} and many others, 
this conjecture was first proved by McClure \& Smith \cite{MS-D}.

In the early 1970s, Lister \cite{Lister}, based on earlier work by Hestenes \cite{Hestenes},
introduced ternary rings (associative triple systems):
additive subgroups of rings which are closed under the associative triple product.
He discussed embeddings of ternary rings in enveloping rings, 
representation theory, radicals, and semisimplicity.
In 1976, Carlsson \cite{Carlsson-1} made the first systematic study of their cohomology.
For an associative triple system $A$, she defined its cohomology in terms of the cohomology of 
its universal embedding.
A few years later \cite{Carlsson-2}, 
she introduced the notion of associative $n$-ary algebra, 
and proved analogues of the Whitehead lemmas and the Wedderburn structure theory.

Totally and partially associative operads with generator $\mu$ of arity $n$ and degree $d$ 
were introduced by Gnedbaye \cite{Gnedbaye}.
Low-dimensional cohomology for totally and partially associative triple systems
has been studied by Ataguema et al.~\cite{Ataguema}.
The Koszulity of these operads has been investigated by Markl \& Remm \cite{MR-1, MR-2}.

For a quadratic operad $\mathcal{P}$, the cohomology groups of a $\mathcal{P}$-algebra
were defined by Markl et al.~\cite{MSS}.
The main theorem connecting $\mathcal{P}$-algebras to algebras over its Koszul dual
$\mathcal{P}^!$ is that the cohomology of a $\mathcal{P}$-algebra has the structure of 
a $\mathcal{P}^!$-algebra \cite{Markl-2,LV}.
Loday \cite{Loday} studied the first explicit example of this correspondence:
for a Lie algebra $\mathfrak{g}$, he constructed a cup product on the Leibniz cohomology:
\[
\smile \colon
HL^p(\mathfrak{g}) \times HL^q(\mathfrak{g}) \longrightarrow HL^{p+q}(\mathfrak{g})
\]    
This product is neither commutative nor associative but satisfies the relation
\[
(f \smile g) \smile h= f \smile (g \smile h) 
+ (-1)^{|g||h|}f \smile (g \smile h),
\]
which makes $HL^*(\mathfrak{g})$ into a Zinbiel (dual Leibniz) algebra.

%%%%%%%%%%%%%%%%%%%%%%%%%%%%%%%%%%%%%%%%%%%%%%%%%%%%%%%%%%%%%%%%%%%%%%%%%%%%%%%%%%%%%%%%%%%%%%%%

\subsection{Results of this paper}

In this paper we study the structure  of Koszul dual  of the totally associative nonsymmetric operads $\tass_d^n$ generated by one $n$-ary operation of (homological) degree $d$ through their operadic cohomology.
We focus on $n = 3$, $d = 0$, the operad $\tass_0^3$ is known to be Koszul by results of Markl \& Remm \cite[\S2.4]{MR-1}.
This operad generated by a ternary operation $x_1, x_2, x_3 \mapsto \mu(x_1,x_2,x_3)$
of degree zero that  satisfies these identical relations:
\[
\mu(\mu(x_1,x_2,x_3),x_4,x_5)=\mu(x_1,\mu(x_2,x_3,x_4),x_5)=\mu(x_1,x_2,\mu(x_3,x_4,x_5)).
\]
Its Koszul dual is the partially associative operad $\pass_1^3$ generated by a ternary operation
$x_1, x_2,x_3 \mapsto \mu^\ast(x_1,x_2,x_3)$ of (homological) degree one satisfiying this relation:
\[
\mu^\ast(\mu^\ast(x_1,x_2,x_3),x_4,x_5) + \mu^\ast(x_1,\mu^\ast(x_2,x_3,x_4),x_5) + \mu^\ast(x_1,x_2,\mu^\ast(x_3,x_4,x_5)) =0.
\]
We describe the operadic cochain complex used for calculation of the cohomology groups of a $\tass_0^3$-algebra $A$ with coefficients in $A$ in the sense of Loday \& Vallette \cite[\S8.12]{LV}.
Then we define a ternary cup product of degree one on this operadic cochain complex .
With this product the corresponding operadic cohomology $H^\bullet_{\tass_0^3}(A,A)$ has the structure of  $\pass_1^3$-algebra.

%%%%%%%%%%%%%%%%%%%%%%%%%%%%%%%%%%%%%%%%%%%%%%%%%%%%%%%%%%%%%%%%%%%%%%%%%%%%%%%%%%%%%%%%%%%%%%%%
%%%%%%%%%%%%%%%%%%%%%%%%%%%%%%%%%%%%%%%%%%%%%%%%%%%%%%%%%%%%%%%%%%%%%%%%%%%%%%%%%%%%%%%%%%%%%%%%
%%%%%%%%%%%%%%%%%%%%%%%%%%%%%%%%%%%%%%%%%%%%%%%%%%%%%%%%%%%%%%%%%%%%%%%%%%%%%%%%%%%%%%%%%%%%%%%%

\section{Preliminaries}

%%%%%%%%%%%%%%%%%%%%%%%%%%%%%%%%%%%%%%%%%%%%%%%%%%%%%%%%%%%%%%%%%%%%%%%%%%%%%%%%%%%%%%%%%%%%%%%%

\subsection{Nonsymmetric operads}

We work with nonsymmetric (ns) operads in the monoidal category 
of $\mathbb{Z}$-graded vector spaces with the Koszul sign rule.
Such an operad $\mathcal{P} = \{ \, \mathcal{P}(n) \mid n \ge 1 \, \}$ is a sequence
of vector spaces that satisfy the unity and associativity axioms
for partial compositions \cite{BD-AO,LV,MSS,Markl-1}.
The space $E$ of generating operations of $\mathcal{P}$ is bigraded by arity $a$ and degree $d$:
\[
E = \bigoplus_{a \ge 2} E(a),
\qquad\qquad
E(a) = \bigoplus_{d \in \mathbb{Z}} E(a)_d.
\]
Thus $E(a)$ is the vector space of operations of arity $a$ which is the direct sum
of vector spaces of operations of various (homological) degrees $d$.

We consider a single generating operation $\mu$ of arity $n \ge 2$
and degree 0.
Thus $E(a) = 0$ for $a \ne n$, $E(n)_d = 0$ for $d \ne 0$, and $E(n)_0 = \mathbb{F}\mu$.
Let $\Gamma(E)$ be the free operad generated by $E$, 
and $\Gamma(E)(N)$ its homogeneous subspace of arity $N$:
\[
\Gamma(E) = \bigoplus_{N \ge 1} \Gamma(E)(N).
\]
To start, $\Gamma(E)(1)$ has basis $\iota = x_1$ (identity unary operation), 
and $\Gamma(E)(n)$ has basis $\mu = \mu(x_1,\dots,x_n)$.
Every partial composition with $\mu$ replaces one argument by $n$ arguments,
increasing the arity by $n{-}1$,
so $\Gamma(E)(N) = 0$ unless $N \equiv 1$ (mod $n{-}1$).
In particular, $\Gamma(E)(2n{-}1)$ has a monomial basis of $n$ partial compositions:
\begin{equation}
\label{associationtypes}
\begin{array}{l}
\mu \circ_1 \mu = \mu\big( \mu(x_1,\dots,x_n), x_{n+1}, \dots, x_{2n-1} \big), 
\quad \dots
\\[1pt]
\mu \circ_i \mu = \mu\big( x_1, \dots, x_{i-1}, \mu(x_i,\dots,x_{i+n-1}), \dots, x_{2n-1} \big), 
\quad \dots 
\\[1pt]
\mu \circ_n \mu = \mu\big( x_1, \dots, x_{n-1}, \mu(x_n,\dots,x_{2n-1}) \big).
\end{array}
\end{equation}

\begin{definition}
The \textit{weight} of a monomial is the number of occurrences of $\mu$.
Thus a monomial of weight $w$ has arity $N = 1 + w(n{-}1)$.
\end{definition}

\begin{lemma}
\label{Catalanlemma}
The dimension of $\Gamma(E)(N)$ for $N = 1 + w(n{-}1)$ is 
the number of plane rooted complete $n$-ary trees with $w$ internal nodes
(counting the root), and this is the $n$-ary Catalan number:
\[
\dim \Gamma(E)(N) = \frac{1}{1{+}(n{-}1)k} \binom{nk}{k}
\]
\end{lemma}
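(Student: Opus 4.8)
The plan is to split the lemma into two parts: (i) a bijective identification of a monomial basis of $\Gamma(E)(N)$ with the set of plane rooted complete $n$-ary trees having $w$ internal nodes, and (ii) the enumeration of those trees.

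For (i), I would invoke the standard structure theory of free nonsymmetric operads: $\Gamma(E)$ is the tree operad on $E$, so a basis of $\Gamma(E)(N)$ is indexed by planar rooted trees with $N$ leaves whose internal vertices are decorated by a basis of $E$ in the matching arity and degree. Since $E$ is one-dimensional, concentrated in arity $n$ and degree $0$, each internal vertex is simply a copy of $\mu$ with exactly $n$ inputs, and — because the trees are planar — reading the leaves from left to right forces the labeling $x_1,\dots,x_N$, so no symmetry is lost and the trees themselves form the index set. Concretely, every monomial of $\Gamma(E)$ is built from $\iota$ and $\mu$ by iterated partial compositions $\alpha \circ_i \mu$, each grafting one copy of $\mu$ onto a leaf; by the unity and associativity axioms the resulting element depends only on the underlying tree, and conversely every such tree is realized. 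A single grafting replaces one leaf by $n$ leaves and adds one internal vertex, so after $w$ graftings one obtains exactly the trees with $w$ internal vertices and $N = 1 + w(n{-}1)$ leaves, matching the notion of weight defined above and, for $w = 2$, the $n$ association types of \eqref{associationtypes}. I expect to dispatch this step largely by citation to \cite{LV,MSS} together with a one-line induction on $w$, rather than rebuilding the free operad from scratch.

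For (ii), let $C_w$ be the number of plane rooted complete $n$-ary trees with $w$ internal vertices, with $C_0 = 1$ (the single leaf). Splitting off the root exhibits such a tree with $w \ge 1$ internal vertices as an ordered $n$-tuple of subtrees, so the generating function $T(x) = \sum_{w \ge 0} C_w x^w$ satisfies $T = 1 + x\,T^n$. Writing $T = 1 + S$ with $S(0) = 0$ turns this into $S = x(1+S)^n$, and Lagrange inversion with $\phi(S) = (1+S)^n$ gives
\[
C_w \;=\; [x^w]\,S \;=\; \frac{1}{w}\,[S^{w-1}]\,(1+S)^{nw} \;=\; \frac{1}{w}\binom{nw}{w-1}
\qquad (w \ge 1).
\]
It then remains to rewrite
\[
\frac{1}{w}\binom{nw}{w-1} \;=\; \frac{(nw)!}{w!\,((n{-}1)w+1)!} \;=\; \frac{1}{(n{-}1)w+1}\binom{nw}{w},
\]
which is the claimed $n$-ary Catalan number with $k = w$; the case $w = 0$ is checked directly. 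A generating-function--free alternative is to verify by induction that $\frac{1}{(n{-}1)w+1}\binom{nw}{w}$ satisfies the convolution recurrence $C_w = \sum_{w_1 + \cdots + w_n = w - 1} C_{w_1}\cdots C_{w_n}$ using the Vandermonde-type identity for Fuss--Catalan numbers, or to apply the cycle lemma to suitable lattice paths; but the Lagrange computation is the shortest route.

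The only point that genuinely needs care is the bookkeeping in step (i): one must work with \emph{plane} (ordered-input) trees, so that the left-to-right order of the leaves pins down the monomial and no extra automorphism factor enters — the symmetric-operad analogue would give a different count. Granting that, step (ii) is routine: the functional equation $T = 1 + xT^n$ is immediate from the recursive structure of trees, and Lagrange inversion plus an elementary factorial identity complete the proof.
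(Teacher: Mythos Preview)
Your argument is correct. Both steps are sound: the tree basis for a free nonsymmetric operad on a single $n$-ary generator is standard and your description of it is accurate, and the Lagrange-inversion computation for the Fuss--Catalan numbers is carried out correctly, including the algebraic identity $\tfrac{1}{w}\binom{nw}{w-1} = \tfrac{1}{(n-1)w+1}\binom{nw}{w}$.

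As for comparison: the paper does not supply a proof of this lemma at all --- it is stated as a known fact and the text moves directly to the next definition. So there is no ``paper's approach'' to contrast with yours; you have simply filled in a gap the authors chose to leave to the literature. Your treatment is the expected one (tree basis of the free ns operad plus the classical Fuss--Catalan enumeration), and the only editorial suggestion is that in a write-up you could compress step~(ii) to a citation for Fuss--Catalan numbers if space is at a premium, since nothing operad-specific happens there.
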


\begin{definition}
A \emph{quadratic relation} is an element of $\Gamma(E)(2n{-}1)$.
Let $(R) \subseteq \Gamma(E)$ be the operad ideal
generated by a subspace of quadratic relations $R \subseteq \Gamma(E)(2n{-}1)$.
The quotient operad $\Gamma(E)/(R)$ is a \emph{quadratic operad}.
\end{definition}

\begin{definition}
\label{tassociativeoperation}
\cite{Gnedbaye}.
The \textit{totally associative operad} with generator $\mu$ of arity $n$ and degree $d$
is the quadratic operad $\tass_d^n = \Gamma(E)/(R)$ where $(R)$ is spanned by
$\mu \circ_i \mu - \mu \circ_{i+1} \mu$ for $1 \le i \le n{-}1$.

\end{definition}

\begin{definition}
\label{passociativeoperation}
\cite{Gnedbaye}.
The \textit{partially associative operad} with generator $\mu$ of arity $n$ and degree $d$
is the quadratic operad $\pass_d^n = \Gamma(E)/(R)$ where $(R)$ is spanned by
\[
\sum_{i=1}^n (-1)^{(i+1)(n-1)} \mu \circ_i \mu.
\]
\end{definition}

%%%%%%%%%%%%%%%%%%%%%%%%%%%%%%%%%%%%%%%%%%%%%%%%%%%%%%%%%%%%%%%%%%%%%%%%%%%%%%%%%%%%%%%%%%%%%%%%

\subsection{Koszul duality}

The following discussion is based on \cite{MR-1,MR-2}.
We write $(\mathbb{F})$ for the $\mathbb{Z}$-graded vector space with 
$(\mathbb{F})_0 = \mathbb{F}$ and $(\mathbb{F})_d = \{0\}$ for $d \ne 0$.
For any $\mathbb{Z}$-graded vector space $V = \bigoplus_{d \in \mathbb{Z}} V_d$,
the graded dual $V^\#$ is defined by
\[
V^\# = \mathrm{Hom}( V, (\mathbb{F}) ) = \bigoplus_{d \in \mathbb{Z}} (V^\#)_d,
\quad\quad\quad
(V^\#)_d = \mathrm{Hom}_d( V, (\mathbb{F}) ).
\]
Since $(\mathbb{F})$ is concentrated in degree 0, the only nonzero maps of degree $d$ 
from $V$ to $(\mathbb{F})$ are concentrated in degree $-d$:
\[
(V^\#)_d = \mathrm{Lin}( V_{-d}, \mathbb{F} ) = ( V_{-d} )^\ast,
\]
the ordinary vector space dual of $V_{-d}$.
For $E = \bigoplus_{a \ge 2} E(a)$ we obtain
\[
( E(a)^\# )_d =
\begin{cases}
\;
\mathbb{F}^\ast &\text{if $a = n$, $d = 0$} \\
\;
0 &\text{otherwise}
\end{cases}
\]
We define the $\mathbb{Z}$-graded vector space $E^\vee(a)$ by
\[
E^\vee(a) = {\uparrow^{a-2}} ( E(a)^\# ),
\]
where $\uparrow^{a-2}$ denotes the $(a{-}2)$-fold suspension.
Since $E(a)^\# = 0$ unless $a = n$,
\[
E^\vee(a) =
\begin{cases}
\;
 {\uparrow^{n-2}} ( E(n)^\# ) &\text{if $a = n$}
\\
\;
0 &\text{otherwise}
\end{cases}
\qquad
( E^\vee(a) )_d
=
\begin{cases}
\;
 \mathbb{F}^\ast &\text{if $a = n$, $d = n{-}2$}
\\
\;
0 &\text{otherwise}
\end{cases}
\]
Thus $\Gamma( E^\vee )$ is the free operad generated by the
dual operation $\mu^\ast$ in degree $n{-}2$:
if $n$ is odd (resp.~even) then $\Gamma( E^\vee )$ is generated by an odd
(resp.~even) operation.

To determine the $\mathbb{Z}$-graded vector space 
$R^\perp \subseteq \Gamma(E^\vee)(2n{-}1)$ of relations satisfied by $\mu^\ast$,
we consider the following morphism of graded vector spaces:
\begin{align*}
&
\langle -, - \rangle \colon
\Gamma(E^\vee)(2n{-}1) \otimes_{\mathbb{F}} \Gamma(E)(2n{-}1) \longrightarrow \mathbb{F},
\\
&
\langle \; {\uparrow^{n-2}} f_1^\ast \circ_i {\uparrow^{n-2}} g_1^\ast, \; f_2 \circ_j g_2 \; \rangle
=
\delta_{ij}(-1)^{(i+1)(n+1)} f_1^\ast(f_2) g_1^\ast(g_2)
\in
\mathbb{F}.
\end{align*}
It follows that the monomial basis of $\Gamma(E^\vee)(2n{-}1)$ is identical 
to that of \eqref{associationtypes} except that $\mu$ is replaced by $\mu^\ast$.
Hence the morphism
$\langle -, - \rangle$ has the simple form
\[
\langle \;
 ( \mu^\ast \circ_i \mu^\ast ), \;
( \mu \circ_j \mu )\;
\rangle
=
(-1)^{(i+1)(n+1)} \delta_{ij}.
\]
This is a $\mathbb{Z}$-graded pairing between $\Gamma(E^\vee)(2n{-}1)$
and $\Gamma(E)(2n{-}1)$.

\begin{definition}\label{Koszuldual}
The space of dual relations $R^\perp \subseteq \Gamma(E^\vee)(2n{-}1)$ 
is the orthogonal complement of $R \subseteq \Gamma(E)(2n{-}1)$
in the sense that
\[
R^\perp
=
\{ \,
\alpha^\ast \in \Gamma(E^\vee)(2n{-}1)
\mid
\alpha^\ast(\beta) = 0, \forall \, \beta \in R
\, \}.
\]
The \emph{Koszul dual} $\mathcal{P}^!$ of the original operad $\mathcal{P}$ is then 
$\mathcal{P}^! = \Gamma(E^\vee) / ( R^\perp )$.
\end{definition}

\begin{proposition}\label{Koszul of TA}
\cite[Proposition 14]{MR-2}
The Koszul dual of the totally associative operad $\tass^n_d$ 
generated by $n$-ary operation in degree $d$
is the partially associative operad $\pass^n_{-d+n-2}$ 
generated by an $n$-ary operation in degree $-d{+}n{-}2$.
\end{proposition}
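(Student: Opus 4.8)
The plan is to unwind Definition~\ref{Koszuldual} in the case at hand, where $E$ is concentrated in arity $n$ and homological degree $d$. First I would track how the two operations $V\mapsto V^{\#}$ and $\uparrow^{\,a-2}$ act on degrees: the graded dual turns the generator $\mu$ of degree $d$ into a generator of degree $-d$, and the $(n{-}2)$-fold suspension then raises this to $-d+n-2$. Hence $\Gamma(E^{\vee})$ is the free nonsymmetric operad on a single $n$-ary generator $\mu^{\ast}$ of degree $-d+n-2$, which is already the generator of $\pass^{n}_{-d+n-2}$; it remains only to identify the relations.

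Next I would describe the arity-$(2n{-}1)$ component. Exactly as in the discussion preceding Definition~\ref{Koszuldual}, the $n$ monomials $\mu^{\ast}\circ_{i}\mu^{\ast}$ ($1\le i\le n$) form a basis of $\Gamma(E^{\vee})(2n{-}1)$ dual to the basis $\mu\circ_{j}\mu$ of $\Gamma(E)(2n{-}1)$ from~\eqref{associationtypes}, and with respect to these bases the pairing $\langle-,-\rangle$ is diagonal: $\langle\mu^{\ast}\circ_{i}\mu^{\ast},\ \mu\circ_{j}\mu\rangle=\varepsilon_{i}\,\delta_{ij}$ for signs $\varepsilon_{i}=\pm1$ determined by the Koszul sign rule and the suspensions (in the normalization used above, $\varepsilon_{i}=(-1)^{(i+1)(n+1)}$). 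In particular $\langle-,-\rangle$ is non-degenerate, so from Definition~\ref{Koszuldual},
\[
\dim R^{\perp}=\dim\Gamma(E^{\vee})(2n{-}1)-\dim R=n-(n{-}1)=1,
\]
the value $\dim R=n{-}1$ coming from Definition~\ref{tassociativeoperation}, since the $n{-}1$ generators $\mu\circ_{i}\mu-\mu\circ_{i+1}\mu$ of $R$ are visibly linearly independent in the $n$-dimensional space $\Gamma(E)(2n{-}1)$.

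Then I would compute the one-dimensional space $R^{\perp}$ explicitly. Writing a general element as $\alpha^{\ast}=\sum_{i=1}^{n}a_{i}\,\mu^{\ast}\circ_{i}\mu^{\ast}$, the orthogonality conditions $\langle\alpha^{\ast},\ \mu\circ_{i}\mu-\mu\circ_{i+1}\mu\rangle=0$ for $1\le i\le n{-}1$ reduce to the recursion $\varepsilon_{i}a_{i}=\varepsilon_{i+1}a_{i+1}$, i.e.\ $a_{i+1}=(-1)^{n+1}a_{i}$ after substituting the signs, whose solutions are the multiples of $a_{i}=(-1)^{(i-1)(n+1)}$. Since $(-1)^{(i-1)(n+1)}=(-1)^{(i+1)(n-1)}$, a generator of $R^{\perp}$ is
\[
\sum_{i=1}^{n}(-1)^{(i+1)(n-1)}\,\mu^{\ast}\circ_{i}\mu^{\ast},
\]
which is precisely the defining relation of $\pass^{n}_{-d+n-2}$ in Definition~\ref{passociativeoperation}. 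Hence $\mathcal{P}^{!}=\Gamma(E^{\vee})/(R^{\perp})=\pass^{n}_{-d+n-2}$.

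The one delicate point, and the step I would be most careful about, is the bookkeeping of signs: the intermediate signs $\varepsilon_{i}$ depend on the Koszul sign convention, on the nonzero degree $d$ of $\mu$, and on applying the suspension $\uparrow^{\,n-2}$ on both sides of the pairing, and a different overall convention can change each $\varepsilon_{i}$. What must be checked is only that the ratios $\varepsilon_{i+1}/\varepsilon_{i}$ are independent of $d$, so that the recursion — and hence $R^{\perp}$ — does not depend on $d$; I would verify this directly from the pairing formula and cross-check the small cases $n=2$ (where the answer is the self-duality of the associative operad, up to the degree shift $d\mapsto-d$) and $n=3$ before asserting the general computation.
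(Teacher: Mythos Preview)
The paper does not prove this proposition; it is stated with a citation to \cite[Proposition 14]{MR-2} and relies on the preparatory material in \S2.2 (the description of $E^{\vee}$ and the pairing $\langle-,-\rangle$ with signs $(-1)^{(i+1)(n+1)}$). The only place the paper carries out such a computation in detail is the special case $n=3$, $d=0$ in Lemma~\ref{KoszuldualofTA3}, where $R^{\perp}$ is found as the nullspace of the $2\times 3$ coefficient matrix of the relations $\mu\circ_i\mu-\mu\circ_{i+1}\mu$.

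Your argument is correct and is exactly the general-$n$ version of that computation: your orthogonality recursion $\varepsilon_i a_i=\varepsilon_{i+1}a_{i+1}$ is the nullspace condition for the $(n{-}1)\times n$ coefficient matrix, and the sign identity $(-1)^{(i-1)(n+1)}=(-1)^{(i+1)(n-1)}$ (the exponents differ by $2i-2n$) is right. So your proposal matches the paper's approach precisely, just supplying the proof the paper outsourced to \cite{MR-2}.
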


%%%%%%%%%%%%%%%%%%%%%%%%%%%%%%%%%%%%%%%%%%%%%%%%%%%%%%%%%%%%%%%%%%%%%%%%%%%%%%%%%%%%%%%%%%%%%%%%

\subsection{Operadic cohomology of $\mathcal{P}$-algebras}

We recall some definitions.

\begin{definition}
\cite[\S5.9.8]{LV}, \cite[\S1.1.20,\S3.71]{MSS} 
\label{algebra over the ns operad}
Let $\mathcal{P}$ be an ns operad.
Let $A$ be a $\mathbb{Z}$-graded vector space and let 
$\mathrm{End}(A) = \bigoplus_{n \ge 1} \mathrm{Hom}( A^{\otimes n}, A )$ 
be its (ns) endomorphism operad.
An operad morphism $\mathcal{P} \rightarrow \mathrm{End}(A)$ 
makes $A$ into a $\mathcal{P}$-\emph{algebra}.
Similarly, let $\mathrm{CoEnd}(A) = \bigoplus_{n \ge 1} \mathrm{Hom}( A, A^{\otimes n} )$ 
be its (ns) endomorphism cooperad.
An operad morphism $\mathcal{P} \rightarrow \mathrm{CoEnd}(A)$ 
makes $A$ into a $\mathcal{P}$-\emph{coalgebra}.
\end{definition}

\begin{definition}\cite[\S12.4.1]{LV}\label{cochaincomplex}
Let $\mathcal{P} = \mathcal{P}(E,R)$ be a quadratic operad and assume $A$ is a $\mathcal{P}$-algebra, and $M$ is an $A$-module.
We also assume that $A$ and $M$ are concentrated in degree 0.
An \emph{operadic cochain complex} is defined by:
\[
C_\mathcal{P}^\bullet(A,M):= Hom (\mathcal{P}^{\text{!}}(A),M)
\]
\end{definition}

%with the differential map (which we will not require in what follows) defined by:
%\[
%\delta_{\pi \kappa} := C_\mathcal{P}^n(A,M) \rightarrow C_\mathcal{P}^{n+1}(A,M), 
%\;\; 
%\delta_{\pi \kappa} (g) := \delta\bar{(g)} {-} ({-}1)^{|g|}gd.
%\]
%
%
%Here, $\kappa: \mathcal{P}^{\text{!}} \rightarrow \mathcal{P}$, is a twisting morphism.
%And map $\pi_{\kappa}: \mathcal{P}^{\text{!}}(A) \rightarrow I \circ A \simeq A $ 
%is the universal twisting morphism, see definition \ref{utwisting morphism}.
%The differential map $d$ comes from the the bar construction
%$B_{\kappa}A := (\mathcal{P}^{\text{!}}(A), d)$ for $A$, see definition \ref{barcnstruction}.
%Also map $\delta\bar{(g)}$ is composite:
%\[
%\mathcal{P}^{\text{!}}(A) \xrightarrow{\Delta} 
%\mathcal{P}^{\text{!}} \circ \mathcal{P}^{\text{!}}(A) 
%\xrightarrow{\kappa \circ (\pi_{\kappa};g)} \mathcal{P} \circ (A;M) \xrightarrow{\gamma_M} M.
%\]

\begin{remark}\cite[\S12.4.1]{LV}\label{operadic cohomoloy}
If we consider the $\mathcal{P}$-algebra $A$ as an $A$-module 
then we obtain the operadic cohomology $H_{\mathcal{P}}^\bullet(A,A)$ of $A$ 
with coefficients in $A$.
\end{remark}

\begin{theorem}\cite[Theorem 23]{Markl-2}\label{cup productaction}
Let $s(\mathcal{P} \otimes \mathcal{P}^{\text{!}})$ be the suspension of 
the operad $\mathcal{P} \otimes \mathcal{P}^{\text{!}}$.
There is a canonical action, called the cup product, 
of $s(\mathcal{P} \otimes \mathcal{P}^{\text{!}})$ 
on the graded vector space $C_\mathcal{P}^\bullet(A;A)$.
\end{theorem}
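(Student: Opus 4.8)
The plan is to produce an explicit morphism of ns operads
$s(\mathcal{P}\otimes\mathcal{P}^!)\to\mathrm{End}\big(C^\bullet_{\mathcal{P}}(A;A)\big)$;
by Definition~\ref{algebra over the ns operad} such a morphism is exactly the asserted action. First I would unpack the cochain complex. Writing $\mathcal{P}^{\mathrm{i}}$ for the Koszul dual cooperad (so that arity-by-arity $\mathcal{P}^!(m)$ is the linear dual of $\mathcal{P}^{\mathrm{i}}(m)$ up to the shift recorded in the pairing preceding Definition~\ref{Koszuldual}), Definition~\ref{cochaincomplex} together with the hypothesis that $A$ is concentrated in degree $0$ gives
\[
C^\bullet_{\mathcal{P}}(A;A)=\mathrm{Hom}\big(\mathcal{P}^{\mathrm{i}}(A),A\big)\;\cong\;\prod_{m\ge 1}\mathcal{P}^{\mathrm{i}}(m)^{\ast}\otimes\mathrm{Hom}(A^{\otimes m},A),
\]
so that, up to operadic (de)suspension, $C^\bullet_{\mathcal{P}}(A;A)$ is the graded vector space underlying the Hadamard (arity-wise tensor) product $\mathcal{P}^!\otimes\mathrm{End}(A)$. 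The internal degree carried by $\mathcal{P}^{\mathrm{i}}(m)$ is what forces the operadic suspension $s$ in the statement.

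Next I would isolate the two structure maps that feed the construction. On one side, the $\mathcal{P}$-algebra structure of $A$ is an operad morphism $\gamma\colon\mathcal{P}\to\mathrm{End}(A)$, so for $p\in\mathcal{P}(n)$ and $\psi_i\in\mathrm{Hom}(A^{\otimes m_i},A)$ one has a composite $p(\psi_1,\dots,\psi_n):=\gamma(p)\circ(\psi_1,\dots,\psi_n)\in\mathrm{Hom}(A^{\otimes M},A)$ with $M=m_1+\dots+m_n$. On the other side, the cocomposition of the cooperad $\mathcal{P}^{\mathrm{i}}$, which in the ns setting is a sum over ordered decompositions $M=m_1+\dots+m_n$, together with the canonical pairing between $\mathcal{P}^!$ and $\mathcal{P}^{\mathrm{i}}$, yields a dual ``cocomposite'' $q\star(\xi_1,\dots,\xi_n)\in\mathcal{P}^{\mathrm{i}}(M)^{\ast}$ for $q\in\mathcal{P}^!(n)$ and $\xi_i\in\mathcal{P}^{\mathrm{i}}(m_i)^{\ast}$ (concretely: apply cocomposition to $c\in\mathcal{P}^{\mathrm{i}}(M)$, then pair the outer factor with $q$ and the $i$-th inner factor with $\xi_i$). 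The cup product is then defined componentwise by
\[
(p\otimes q)(\xi_1\otimes\psi_1,\dots,\xi_n\otimes\psi_n)\;:=\;\pm\,\big(q\star(\xi_1,\dots,\xi_n)\big)\otimes p(\psi_1,\dots,\psi_n),
\]
with the sign prescribed by the Koszul rule for moving the internal degrees of the $\xi_i$ past the $\psi_j$ and past the suspension coordinates, and extended over the $\prod$ in arities coordinatewise.

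It then remains to check the operad axioms for this family of maps. The unit axiom is immediate, since the operad unit of $\mathcal{P}\otimes\mathcal{P}^!$ is $\iota\otimes\iota$ and acts as the identity. For associativity (equivariance is vacuous for ns operads) I would expand both iterated compositions and compare coordinates: the $\mathrm{End}(A)$-coordinate matches because $\gamma$ is an operad morphism and $\mathrm{End}(A)$ satisfies the operad associativity axiom, while the $\mathcal{P}^{\mathrm{i}}(-)^{\ast}$-coordinate matches because the cocomposition of $\mathcal{P}^{\mathrm{i}}$ is coassociative and the pairing of Definition~\ref{Koszuldual} intertwines operadic composition in $\mathcal{P}^!$ with cocomposition in $\mathcal{P}^{\mathrm{i}}$. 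Finally, since the cup product is assembled from the same cocomposition of $\mathcal{P}^{\mathrm{i}}$ and a single application of $\gamma$ that build the Koszul differential on $C^\bullet_{\mathcal{P}}(A;A)$, the action commutes with that differential and so descends to $H^\bullet_{\mathcal{P}}(A;A)$ as in Remark~\ref{operadic cohomoloy}.

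The main obstacle is the sign bookkeeping in the associativity verification: the Koszul sign rule must be reconciled with the $(n{-}2)$-fold suspensions built into $\mathcal{P}^{\mathrm{i}}$ (hence into the pairing associated with Definition~\ref{Koszuldual}) and with the operadic suspension $s$, and one must check that these conspire so that the naive formula above is associative exactly after passing from $\mathcal{P}\otimes\mathcal{P}^!$ to $s(\mathcal{P}\otimes\mathcal{P}^!)$ — this is precisely why the suspension appears in the statement. A minor technical point is the dualization in the second structure map, which is harmless because $\mathcal{P}$ is quadratic with a finite-dimensional space of generators, so each $\mathcal{P}^{\mathrm{i}}(m)$ is finite-dimensional and the pairing $\mathcal{P}^!(m)\otimes\mathcal{P}^{\mathrm{i}}(m)\to\mathbb{F}$ is perfect.
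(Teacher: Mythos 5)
The paper does not actually prove this statement: it is quoted from \cite[Theorem 23]{Markl-2}, so there is no in-paper argument to compare against, only the construction in the cited source. Your reconstruction --- identifying $C^\bullet_{\mathcal{P}}(A;A)$, up to suspension, with the total space of the Hadamard product $\mathcal{P}^{!}\otimes\mathrm{End}(A)$, letting $\mathcal{P}$ act through the structure morphism $\gamma\colon\mathcal{P}\to\mathrm{End}(A)$ on one tensor factor and $\mathcal{P}^{!}$ act through the dualized cocomposition of the Koszul dual cooperad $\mathcal{P}^{\mathrm{i}}$ on the other, and then reducing operad associativity to coassociativity of $\mathcal{P}^{\mathrm{i}}$ plus the fact that $\gamma$ is an operad morphism --- is exactly Markl's convolution-type argument and is correct in outline, with the one genuinely delicate point (the Koszul signs and the role of the operadic suspension $s$) correctly identified even though the computation is deferred.
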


%%%%%%%%%%%%%%%%%%%%%%%%%%%%%%%%%%%%%%%%%%%%%%%%%%%%%%%%%%%%%%%%%%%%%%%%%%%%%%%%%%%%%%%%%%%%%%%%
%%%%%%%%%%%%%%%%%%%%%%%%%%%%%%%%%%%%%%%%%%%%%%%%%%%%%%%%%%%%%%%%%%%%%%%%%%%%%%%%%%%%%%%%%%%%%%%%
%%%%%%%%%%%%%%%%%%%%%%%%%%%%%%%%%%%%%%%%%%%%%%%%%%%%%%%%%%%%%%%%%%%%%%%%%%%%%%%%%%%%%%%%%%%%%%%%

\section{Ternary case}

In this section we consider the operad $\tass_0^3$ generated by one totally associative
ternary operation $\mu$ of (homological) degree 0, 
and its Koszul dual $\pass_1^3$ generated by one partially associative ternary operation 
$\mu^\ast$ of (homological) degree 1.
That is, we show that the operadic complex
$C_\mathcal{P}^\bullet(A;A)$ admits a ternary cup product of degree 1.
Our goal is to understand explicitly the partially associative structure of the cohomology
of a totally associative algebra.
For a detailed study of this Koszul pair, see \cite{DMR,MR-1,MR-2}.
For results on algebras over the operad $\pass_0^3$ (that is, partially associative triple systems
with an operation of degree 0) see \cite{Bremner,GR}.

%%%%%%%%%%%%%%%%%%%%%%%%%%%%%%%%%%%%%%%%%%%%%%%%%%%%%%%%%%%%%%%%%%%%%%%%%%%%%%%%%%%%%%%%%%%%%%%%

\subsection{The case $n = 3$}

Specializing the general discussion to the case $n = 3$, we obtain the following results.
First, $\Gamma(E)(2n-1)$, which is $\Gamma(E)(5)$ in ternary case, 
is isomorphic to the direct sum of three copies of the graded vector spaces 
corresponding to the three association types of arity five:
\begin{equation}\label{basis3aryoperation}
\begin{array}{l}
\mu \circ_1 \mu = \mu(\mu(x_1,x_2,x_3),x_4,x_5), \\[1pt]
\mu \circ_2 \mu = \mu(x_1,\mu(x_2,x_3,x_4),x_5), \\[1pt]
\mu \circ_3 \mu = \mu(x_1,x_2,\omega(x_3,x_4,x_5)),
\end{array}
\end{equation}
where $\circ_i$ denotes the usual operadic partial composition.
Thus $\dim \Gamma(E)(5) = 3$ with the monomial basis
$\mu \circ_1 \mu$, $\mu \circ_2 \mu$, $\mu \circ_3 \mu$.
Write $R \subseteq \Gamma(E)(5)$ for the graded subspace of quadratic relations
$\mu \circ_1 \mu - \mu \circ_2 \mu$,
$\mu \circ_1 \mu - \mu \circ_3 \mu$
corresponding to associative triple systems, 
and $(R) \subset \Gamma(E)$ for the operad ideal generated by $R$.
Write $\mathcal{P} = \Gamma(E) / (R)$ for the operad governing totally associative triple systems.
Let $V = E(a)$, We obtain
\[
( E(a)^\# )_d =
\begin{cases}
\Omega^\ast &\text{if $a = 3$, $d = 0$} \\
\{0\} &\text{otherwise}
\end{cases}
\]
where $\Omega \cong \mathbb{F}$.
Therefore the degree-graded vector space $E^\vee(a)$ would be:
\[
E^\vee(a) = {\uparrow^{a-2}} ( E(a)^\# ),
\]
where $\uparrow^{a-2}$ denotes the $(a{-}2)$-fold suspension of the degree-graded vector space $E(a)^\#$.
Since $E(a)^\# = \{0\}$ unless $a = 3$, we obtain
\[
E^\vee(a) =
\begin{cases}
 {\uparrow} ( E(3)^\# ) &\text{if $a = 3$}
\\
\{0\} &\text{otherwise}
\end{cases}
\]
By definition of suspension, this gives
\[
( E^\vee(a) )_d
=
\begin{cases}
 \Omega^\ast &\text{if $a = 3$, $d = 1$}
\\
\{0\} &\text{otherwise}.
\end{cases}
\]
Thus $E^\vee(3)$ is 1-dimensional, concentrated in degree 1, and
$E^\vee(a)$ is 0-dimensional for $a \ne 3$.
Therefore $\Gamma( E^\vee )$ is the free operad generated by the
dual operation $\mu^\ast$ placed in degree 1;
that is, $\Gamma( E^\vee )$ is generated by an odd operation.

We next determine the degree graded subspace $R^\perp \subseteq \Gamma(E^\vee)(5)$ 
of relations satisfied by the generating operation $\mu^\ast$.
(Note that these relations are quadratic and hence have homological degree 2;
that is, they represent even operations.)
Consider the following morphism of graded vector spaces:
\[
\langle -, - \rangle \colon
\Gamma(E^\vee)(5) \otimes_{\mathbb{F}} \Gamma(E)(5) \longrightarrow \mathbb{F},
\]
defined by the equation
\[
\langle \; {\uparrow} f_1^\ast \circ_i {\uparrow} g_1^\ast, \; f_2 \circ_j g_2 \; \rangle
=
\delta_{ij} f_1^\ast(f_2) g_1^\ast(g_2)
\in
\mathbb{F}.
\]
In other words, if we imitate the monomial basis \eqref{basis3aryoperation} for $\Gamma(E)(5)$, we obtain
the following monomial basis of $\Gamma(E^\vee)(5)$,
\begin{equation}
\label{dualquadraticmonomialbasis}
\begin{array}{l}
\mu^\ast \circ_1 \mu^\ast = \mu^\ast(\mu^\ast(x_1,x_2,x_3),x_4,x_5), \\[1pt]
\mu^\ast \circ_2 \mu^\ast = \mu^\ast(x_1,\mu^\ast(x_2,x_3,x_4),x_5), \\[1pt]
\mu^\ast \circ_3 \mu^\ast = \mu^\ast(x_1,x_2,\mu^\ast(x_3,x_4,x_5)),
\end{array}
\end{equation}
For this basis of $\Gamma(E^\vee)(5)$, the graded morphism
$\langle -, - \rangle$ takes the simple form
\[
\langle \;
 ( \mu^\ast \circ_i \mu^\ast ), \;
( \mu \circ_j \omega ) \;
\rangle
=
\delta_{ij}.
\]
We now define $R^\perp \subseteq \Gamma(E^\vee)(5)$ to be 
the orthogonal complement of $R \subseteq \Gamma(E)(5)$:
\[
R^\perp
=
\{ \,
\alpha^\ast \in \Gamma(E^\vee)(5)
\mid
\alpha^\ast(\beta) = 0, \forall \, \beta \in R
\, \}.
\]
The Koszul dual $\mathcal{P}^!$ of the original operad $\mathcal{P}$ is then defined by
\[
\mathcal{P}^! = \Gamma(E^\vee) / ( R^\perp ).
\]
Thus $\tass_0^3$ is generated by the ternary operation $\mu$ which satisfies
\[
\mu \circ_1 \mu =\mu \circ_2 \mu = \mu \circ_3 \mu.
\]
Similarly, $\pass_1^3$ is generated by the ternary operation $\nu$ which satisfies
\[
\mu \circ_1 \mu + \mu \circ_2 \mu + \mu \circ_3 \mu = 0.
\]

\begin{lemma} \label{KoszuldualofTA3}
Let $\mathcal{P}$ be a totally associative operad $\tass^3_0$ 
generated by the ternary operation $\mu$ in degree 0.
Then its Koszul dual $\mathcal{P}^{\text{!}}$ is the partially associative operad
$\pass^3_1$ generated by the ternary operation $\mu^\ast$ of degree 1 which satisfies
 \[
 \mu^\ast(\mu^\ast(-,-,-),-,-) + \mu^\ast(-,\mu^\ast(-,-,-),-) + \mu^\ast(-,-,\mu^\ast(-,-,-))=0
 \]
\end{lemma}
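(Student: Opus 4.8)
The plan is to compute the degree-graded subspace $R^\perp \subseteq \Gamma(E^\vee)(5)$ explicitly via the pairing $\langle-,-\rangle$ set up above, and then read off the presentation of $\mathcal{P}^{\text{!}} = \Gamma(E^\vee)/(R^\perp)$ from Definition~\ref{Koszuldual}. This is just the specialization of Proposition~\ref{Koszul of TA} to $n = 3$, $d = 0$, but carried out by hand so that the resulting relation becomes visible.

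First I would record the relevant dimensions. From \eqref{basis3aryoperation} (equivalently Lemma~\ref{Catalanlemma}), $\Gamma(E)(5)$ is $3$-dimensional with monomial basis $\mu\circ_1\mu$, $\mu\circ_2\mu$, $\mu\circ_3\mu$, and by \eqref{dualquadraticmonomialbasis} the space $\Gamma(E^\vee)(5)$ is $3$-dimensional with the parallel monomial basis $\mu^\ast\circ_1\mu^\ast$, $\mu^\ast\circ_2\mu^\ast$, $\mu^\ast\circ_3\mu^\ast$, the generator $\mu^\ast$ sitting in homological degree $n-2 = 1$. The total-associativity relation space $R\subseteq\Gamma(E)(5)$ is spanned by $\mu\circ_1\mu-\mu\circ_2\mu$ and $\mu\circ_1\mu-\mu\circ_3\mu$, hence $2$-dimensional, so $R^\perp$ will be $1$-dimensional provided the pairing is nondegenerate.

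Next I would evaluate the pairing. For $n = 3$ the Koszul sign $(-1)^{(i+1)(n+1)}$ equals $(-1)^{4(i+1)} = 1$ for all $i$, so $\langle\,\mu^\ast\circ_i\mu^\ast,\ \mu\circ_j\mu\,\rangle = \delta_{ij}$; in particular the two monomial bases are genuinely dual and $\langle-,-\rangle$ is nondegenerate. Writing $\alpha^\ast = \sum_{i=1}^{3} a_i\,(\mu^\ast\circ_i\mu^\ast)$, membership in $R^\perp$ amounts to $\langle\alpha^\ast,\ \mu\circ_1\mu-\mu\circ_2\mu\rangle = a_1 - a_2 = 0$ together with $\langle\alpha^\ast,\ \mu\circ_1\mu-\mu\circ_3\mu\rangle = a_1 - a_3 = 0$, i.e.\ $a_1 = a_2 = a_3$. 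Therefore $R^\perp = \mathbb{F}\cdot(\mu^\ast\circ_1\mu^\ast + \mu^\ast\circ_2\mu^\ast + \mu^\ast\circ_3\mu^\ast)$.

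Finally I would assemble the conclusion: by Definition~\ref{Koszuldual}, $\mathcal{P}^{\text{!}} = \Gamma(E^\vee)/(R^\perp)$ is generated by the degree-$1$ ternary operation $\mu^\ast$ subject to the single quadratic relation $\mu^\ast\circ_1\mu^\ast + \mu^\ast\circ_2\mu^\ast + \mu^\ast\circ_3\mu^\ast = 0$, and spelling out the partial compositions via \eqref{dualquadraticmonomialbasis} gives exactly the displayed identity. Comparing with Definition~\ref{passociativeoperation}, and noting that for $n = 3$ the sign $(-1)^{(i+1)(n-1)} = (-1)^{2(i+1)} = 1$, this is precisely the defining relation of $\pass^3_1$, which also matches the degree $-d+n-2 = 1$ predicted by Proposition~\ref{Koszul of TA}. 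The only point requiring any care is the sign bookkeeping in the suspension and in $\langle-,-\rangle$; happily all signs collapse to $+1$ in the ternary case, so there is no real obstacle beyond confirming that the pairing is nondegenerate, which gives $\dim R^\perp = \dim\Gamma(E^\vee)(5) - \dim R = 1$.
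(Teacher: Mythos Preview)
Your argument is correct and follows essentially the same route as the paper's own proof: both identify $R^\perp$ as the orthogonal complement of the two-dimensional relation space inside the three-dimensional $\Gamma(E^\vee)(5)$, obtaining the single relation $\mu^\ast\circ_1\mu^\ast+\mu^\ast\circ_2\mu^\ast+\mu^\ast\circ_3\mu^\ast$. The paper phrases the computation as finding the nullspace of the $2\times 3$ coefficient matrix, while you write out the two linear equations $a_1-a_2=0$, $a_1-a_3=0$ directly and add some extra sign bookkeeping and cross-referencing; the content is the same.
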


\begin{proof}
The operad $\tass^3_0$ is the quadratic operad $\Gamma(E)/ (R)$ where
$R \subseteq \Gamma(E)(5)$ is the subspace of quadratic relations generated by
$\mu \circ_1 \mu - \mu \circ_2 \mu$ and
$\mu \circ_1 \mu - \mu \circ_3 \mu$.
So to find
$R^\perp
=
\{ \,
\alpha^\ast \in \Gamma(E^\vee)(5)
\mid
\alpha^\ast(\beta) = 0, \forall \, \beta \in R
\, \}$,
it suffices to find the nullspace of the matrix whose rows
are the coefficient vectors of the two relations:
\[
\left[ \begin{array}{rrr}
 1 & -1 &  0\\
 1 &  0 & -1
\end{array} \right]
\]
A basis for the nullspace is $[1,1,1]$ and hence
$\alpha^\ast = \mu^\ast \circ_1 \mu^\ast + \mu^\ast \circ_2 \mu^\ast + \mu^\ast \circ_3 \mu^\ast $.
So the ternary operation $\mu^\ast$ of $\Gamma(E^\vee) / ( R^\perp )$ satisfies the claimed relation.
\end{proof}

\begin{notation}
For simplicity we denote the three generators
$\mu^\ast \circ_1 \mu^\ast$, $\mu^\ast \circ_2 \mu^\ast$, and $\mu^\ast \circ_3 \mu^\ast$ of
$\mathcal{P}^{\text{!}(5)}$ by $m_1,\,m_2,\, m_3$.
\end{notation}

%%%%%%%%%%%%%%%%%%%%%%%%%%%%%%%%%%%%%%%%%%%%%%%%%%%%%%%%%%%%%%%%%%%%%%%%%%%%%%%%%%%%%%%%%%%%%%%%

\subsection{Ternary cup product on the cochain complex}

Let $\mathcal{P}$ be the ns associative operad generated by a ternary operation in degree 0 
and let $A$ be a $\mathcal{P}$-algebra.
For any even number $n$, $\mathcal{P}(n)=0$.
If we consider $A$ as an $A$-module, then by definition \ref{cochaincomplex}, 
$C_\mathcal{P}^n(A,A):= Hom (\mathcal{P}^{\text{!}^{(2n+1)}}(A),A)$.
So the first three classes of cochain complexes would be equal to:
\begin{align*}
C_\mathcal{P}^0(A,A) &= Hom (\mathcal{P}^{\text{!}^{(1)}}(A),A))\cong Hom(\mathcal{P}^{\text{!}}(1),Hom (A,A) )
\\
C_\mathcal{P}^1(A,A) &= \mathrm{Hom} (\mathcal{P}^{\text{!}^{(3)}}(A),A))\cong \mathrm{Hom}(\mathcal{P}^{\text{!}}(3),\mathrm{Hom} (A^{\otimes 3},A) )
\\
C_\mathcal{P}^2(A,A) &= \mathrm{Hom} (\mathcal{P}^{\text{!}^{(5)}}(A),A))\cong \mathrm{Hom}(\mathcal{P}^{\text{!}}(5),\mathrm{Hom} (A^{\otimes 5},A))
\end{align*}
The second isomorphisms comes from the following natural isomorphism:
\[
\mathrm{Hom}(\mathcal{P}(n),\mathrm{Hom} (A^{\otimes n},A) ) \cong \mathrm{Hom}(\mathcal{P}(n)\otimes A^{\otimes n},A).
\]

\begin{remark}\label{elementofchaincopmlex}
 By the Lemma \ref{KoszuldualofTA3}, the operation $\mu^{\ast}$ is ternary and $\mathcal{P}^{\text{!}}(3)= \langle \mu^{\ast} \rangle$ and $\mathcal{P}^{\text{!}}(5)$ is generated by generators $m_1,\,m_2,\, m_3$ such that $m_1, + \,m_2, +\, m_3 =0$.
So two operations of these three, like $m_1,\,m_2$ form the basis of $\mathcal{P}^{\text{!}}(5)$.
If $f$ in $C_\mathcal{P}^0(A,A)$, it means $f \in Lin(A,A)$.
Also if $f$ in $C_\mathcal{P}^1(A,A)$, then $f(\mu ^{\ast})$ is an element of $Lin(A^{\otimes 3},A)$.
So there are three linear homeomorphisms $f_1, f_2, f_3: A \rightarrow A$ corresponding to $f(\mu ^{\ast}):A^{\otimes 3} \rightarrow A$ that:
\[
f(\mu ^{\ast})(a_1,a_2,a_3)= \mu (f_1(a_1), f_2(a_2), f_3(a_3)).
\]
Let $f$ be an element of $C_\mathcal{P}^2(A,A)= \mathrm{Hom}(\mathcal{P}^{\text{!}}(5),\mathrm{Hom} (A^{\otimes 5},A))$, then $f(m_j)$ for $j=1,2$ are elements of $Lin(A^{\otimes 5},A)$.
 So there should be five linear homeomorphism $f_i: A \rightarrow A,\; i=1,\dots 5$ corresponding to $f$ such that
\begin{align*}
f(m_1)(a_1,a_2,a_3,a_4,a_5) &= \mu \big( \mu ( f_1(a_1), f_2(a_2), f_3(a_3)), f_4(a_4), f_5(a_5)\big),
\\
f(m_2)(a_1,a_2,a_3,a_4,a_5) &= \mu \big( f_1(a_1), \mu (f_2(a_2), f_3(a_3), f_4(a_4)), f_5(a_5)\big).
\end{align*}
\end{remark}

\begin{definition}\label{cupproductincochaincomplex1}
Let $C_\mathcal{P}^\bullet(A,A):= \mathrm{Hom} (\mathcal{P}^{\text{!}}(A),A)$ be the operadic cochain complex for totally associative operad
$\mathcal{P}$ generated by $\mu$.
Let $f,g,h$ be three elements of $C_\mathcal{P}^\bullet(A,A)$ with degree $n_1$, $n_2$, $n_3$.
Define the \emph{ternary cup product} by
\[
\langle -,-,-\rangle: C_\mathcal{P}^\bullet(A,A) \otimes C_\mathcal{P}^\bullet(A,A) \otimes C_\mathcal{P}^\bullet(A,A) \rightarrow C_\mathcal{P}^\bullet(A,A),
\]
by the equation
\[
\langle f,g,h\rangle (\gamma)(a_1, \dots, a_{2(k) +1})= \gamma(f \otimes g\otimes h)(a_1, \dots ,a_{2k+1}),
\]
where $\langle f,g,h\rangle \in C_\mathcal{P}^{k}(A,A)$ for $k=n_1 +n_2 +n_3 +1$
and $\gamma \in \mathcal{P}^{\text{!}}(2(k)+1)$.
\end{definition}

For a fixed generator $\gamma$ in $\mathcal{P}^{\text{!}}(2k+1)$, as $f,g,h$ are elements of $C_\mathcal{P}^\bullet(A,A)$ of degree $n_1$, $n_2$ and $n_3$, they act on the elements of
$\mathcal{P}^{\text{!}}(2n_1+1)$, $\mathcal{P}^{\text{!}}(2 n_2+1)$, and $\mathcal{P}^{\text{!}}(2 n_3+1)$ respectively.
So by Remark \ref{elementofchaincopmlex}, there are linear homeomorphism $f_i,\, i=1,\dots, 2n_1+1$, $g_j,\, j=1,\dots, 2n_2+1$, and $h_l,\, l=1, \dots, 2n_3+1$ elements of $Lin( A,A)$ corresponding to $f,g,h$, respectively.
Therefore the definition makes sense.

\begin{example}
Assume $f,g,h$ are elements of $C_\mathcal{P}^0(A,A)$. Then
$\langle f,g,h\rangle\in C_\mathcal{P}^1(A,A)$ and $\langle f,g,h\rangle (\mu^{\ast})$ would be an element of $Lin(A^{\otimes 3},A)$, so by last definition:
\[
\langle f,g,h\rangle (\mu^{\star})(a_1,a_2,a_3)= \mu (f(a_1), g(a_2), h(a_3))
\]
\end{example}

\begin{example}
Let us to describe the ternary product $\langle -,-,-\rangle$, when $f$ has degree 1 and both of $g,h$ have degree 0.
In this case $\langle f,g,h\rangle$ would be element of
$C_\mathcal{P}^2(A,A)$, so
$\langle f,g,h\rangle(m_i)$ for $i=1,2$ is in $Lin(A^{\otimes 5},A)$.
But $f$ has degree 1, by Remark \ref{elementofchaincopmlex}, there exist three linear homeomorphism $f_1, f_2, f_3: A \rightarrow A$ corresponding to $f$.
Therefore
\begin{align*}
\langle f,g,h\rangle (m_1)(a_1,a_2,a_3,a_4,a_5)& = \mu \big( \mu (f(a_1), f(a_2), f(a_3)), g(a_4), h(a_5)),\\
\langle f,g,h\rangle (m_2)(a_1,a_2,a_3,a_4,a_5)& = \mu \big((f(a_1), \mu (f(a_2), f(a_3), g(a_4)), h(a_5)).
\end{align*}
\end{example}

%%%%%%%%%%%%%%%%%%%%%%%%%%%%%%%%%%%%%%%%%%%%%%%%%%%%%%%%%%%%%%%%%%%%%%%%%%%%%%%%%%%%%%%%%%%%%%%%

\subsection{$\pass_1^3$-algebra structure of cochain complex of}

\begin{theorem}\label{cochainispartially1}
Let $\mathcal{P}$ be a ns totally associative operad generated by ternary generator $\mu$ 
in degree 0, let $A$ be a $\mathcal{P}$-algebra, and $C_\mathcal{P}^\bullet(A,A)$ 
the operadic cochain complex (Definition \ref{cochaincomplex}).
The ternary cup product $\langle -,-,- \rangle$ on $C_\mathcal{P}^\bullet(A,A)$ is a partially associative product of degree 1.
\end{theorem}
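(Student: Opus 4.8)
The plan is to verify directly that the ternary cup product $\langle -,-,-\rangle$ satisfies the partial associativity relation defining $\pass_1^3$, namely that the alternating sum of the three nested products vanishes up to the appropriate Koszul signs. First I would fix three cochains $f,g,h,k$ of degrees $n_1,n_2,n_3,n_4$ (we need four arguments since the relation to check is a quadratic relation in the ternary operation $\langle -,-,-\rangle$, involving two applications and hence five slots among the $f$'s) and compute each of the three terms $\langle \langle f,g,h\rangle, k, \ell\rangle$, $\langle f, \langle g,h,k\rangle, \ell\rangle$, $\langle f,g,\langle h,k,\ell\rangle\rangle$ by unwinding Definition \ref{cupproductincochaincomplex1}. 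Each such iterated product is an element of $C_\mathcal{P}^{N}(A,A)$ with $N = n_1+\dots+n_5+2$, so it is determined by its values on the generators of $\mathcal{P}^{!}(2N+1)$; by Remark \ref{elementofchaincopmlex} and Lemma \ref{Catalanlemma} those generators correspond to the association types of arity $2N+1$ built from $\mu^\ast$, subject to the partial associativity relations in $\pass_1^3$.

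The key computational step is to express, for each generator $\gamma \in \mathcal{P}^{!}(2N+1)$, the value $\langle\langle f,g,h\rangle,k,\ell\rangle(\gamma)$ in terms of the underlying linear maps $f_i, g_j, h_\ell, k_p, \ell_q \in \mathrm{Lin}(A,A)$ and the totally associative product $\mu$. Here the crucial simplification is that $\mu$ is \emph{totally} associative, so all the ways of parenthesizing a string of $\mu$'s applied to the relevant arguments agree; this means that the three terms $\langle\langle f,g,h\rangle,k,\ell\rangle(\gamma)$, $\langle f,\langle g,h,k\rangle,\ell\rangle(\gamma)$, $\langle f,g,\langle h,k,\ell\rangle\rangle(\gamma)$ all produce the \emph{same} multilinear expression in the $f_i,\dots,\ell_q$ once $\gamma$ is evaluated — the only differences are (i) which generator $m_i$ of $\mathcal{P}^{!}(5)$ (or its higher analogue) is paired against the inner bracket, and (ii) the Koszul signs picked up from commuting the odd operation $\mu^\ast$ past the various cochains. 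I would organize this by tracking the sign $(-1)^{?}$ contributed when the inner bracket, which raises degree by one, is inserted into slot $1$, $2$, or $3$ of the outer bracket, and then invoke the defining relation $m_1 + m_2 + m_3 = 0$ of $\pass_1^3$ (Lemma \ref{KoszuldualofTA3}) to conclude that the signed sum of the three evaluations is $0$.

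Concretely, I would first treat the base case where all of $f,g,h,k,\ell$ have degree $0$, so each iterated bracket lands in $C^2_\mathcal{P}(A,A)$ and must be evaluated on $m_1, m_2$ (a basis of $\mathcal{P}^{!}(5)$), as in the second Example above; checking the relation here reduces to the single identity $m_1 + m_2 + m_3 = 0$ together with total associativity of $\mu$. Then I would indicate how the general case follows by the same bookkeeping: the action of a degree-$n_i$ cochain on $\mathcal{P}^{!}(2n_i+1)$ is again governed by $\pass_1^3$-relations among association types, so the same cancellation propagates. The parity statement — that the cup product is of degree $1$, hence an odd operation, matching the fact that $n=3$ is odd and $\mu^\ast$ sits in degree $n-2 = 1$ — follows from counting: inserting one bracket into another adds the $+1$ recorded in Definition \ref{cupproductincochaincomplex1} ($k = n_1+n_2+n_3+1$), which is exactly the degree of the generator $\mu^\ast$ of $\pass_1^3$.

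The main obstacle I anticipate is getting the Koszul signs exactly right: the suspension $\uparrow^{n-2} = \uparrow$ built into $E^\vee$, the odd degree of $\mu^\ast$, and the degrees $n_i$ of the cochains all contribute sign factors when the tensor factors $f \otimes g \otimes h$ are permuted past each other and past $\gamma$ in the formula $\langle f,g,h\rangle(\gamma) = \gamma(f\otimes g\otimes h)$. I would handle this by adopting a single consistent sign convention (the one implicit in the pairing $\langle -,-\rangle$ defining $R^\perp$, with its factor $(-1)^{(i+1)(n+1)}$ specialized to $n=3$, i.e. trivial), checking it against the two worked Examples, and only then expanding the general relation; the underlying combinatorics of which monomials appear is forced by total associativity of $\mu$ and is comparatively routine.
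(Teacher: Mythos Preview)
Your proposal is correct and follows essentially the same route as the paper: a direct verification of the partial associativity relation, handling first the base case where all five cochains have degree~$0$ by evaluating on the generators $m_1,m_2$ of $\mathcal{P}^{!}(5)$ and invoking $m_1+m_2+m_3=0$ from Lemma~\ref{KoszuldualofTA3}, then extending to arbitrary degrees by an induction that peels one $\mu^\ast$ off the test element $\gamma\in\mathcal{P}^{!}(2N+1)$. Your anticipated sign obstacle is a non-issue here, as you yourself observe: for $n=3$ the pairing factor $(-1)^{(i+1)(n+1)}$ is trivial and the relation carries all plus signs, so the phrase ``alternating sum'' in your opening line should simply be ``sum''.
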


\begin{proof}
We need to show that
\[
\langle \langle f,g,h \rangle, k,l \rangle + \langle f,\langle g,h,k \rangle,l\rangle + \langle f,g, \langle h,k,l\rangle\rangle =0,
\]
where $f,g,h,k,l$ are elements of $C_\mathcal{P}^\bullet(A,A)$ of degree $n_1,n_2,n_3,n_4,n_5$, respectively.

We prove by induction.
So first, we check our claim for the simplest case, i.e $f,g,h,k,l$ are of degree zero, so they are elements of $Lin(A,A)$.
By our definition $\langle f,g,h \rangle$, $\langle g,h,k \rangle$, $ \langle h,k,l \rangle$ are of degree one, so they belong to
$C_\mathcal{P}^1(A,A)$.
Also $\langle \langle f,g,h \rangle, k,l \rangle$ and the two other terms of above equality are of degree two, so they are elements of $C_\mathcal{P}^3(A,A)$.
By remark \ref{elementofchaincopmlex}, $\langle \langle f,g,h \rangle, k,l \rangle (m_j):A^{\otimes 5} \rightarrow A$.
Therefore it is enough to show that
\[
\big(\langle \langle f,g,h \rangle, k,l \rangle + \langle f,\langle g,h,k \rangle,l\rangle + \langle f,g, \langle h,k,l\rangle\rangle\big) (m_j)=0,\; j=1,2.
\]
Let us try first for $m_1$:
\begin{align*}
\big(\langle\langle f,g,h\rangle,k,l\rangle (m_1)\big)(a_1,a_2,a_3,a_4,a_5)&:= \mu (\mu (f(a_1), g(a_2), h(a_3)), k(a_4), l(a_5))\\
\big(\langle f,\langle g,h,k\rangle,l\rangle (m_1)\big)(a_1,a_2,a_3,a_4,a_5)&:= \mu ( f(a_1),\mu ( g(a_2), h(a_3), k(a_4)), l(a_5))\\
\big(\langle f,g,\langle h,k,l\rangle\rangle (m_1)\big)(a_1,a_2,a_3,a_4,a_5)&:= \mu ( f(a_1), g(a_2), \mu (h(a_3), k(a_4), l(a_5)))
\end{align*}
But the second and third equality are also equal respectively to 
\[
\langle\langle f,g,h\rangle,k,l\rangle (m_2)(a_1,a_2,a_3,a_4,a_5),
\qquad
\langle\langle f,g,h\rangle,k,l\rangle (m_3)(a_1,a_2,a_3,a_4,a_5).
\] 
On the other hand, by Lemma \ref{KoszuldualofTA3}, we assumed $m_1+m_2+m_3=0$, so:
\begin{align*}
&\big(\langle\langle f,g,h\rangle,k,l\rangle + \langle f,\langle g,h,k\rangle,l\rangle + \langle f,g,\langle h,k,l\rangle\rangle )\big)(m_1)(a_1,a_2,a_3,a_4,a_5)\\
&= m_1 (f(a_1), g(a_2), h(a_3), k(a_4), l(a_5))+
m_2 (f(a_1), g(a_2), h(a_3), k(a_4), l(a_5))\\
&+ m_3 (f(a_1), g(a_2), h(a_3), k(a_4), l(a_5))\\
&= (m_1+m_2+m_3)(f(a_1), g(a_2), h(a_3), k(a_4), l(a_5))=0
\end{align*}
By induction, assume if $f,g,h,k,l$ are elements of degree $n_1,\dots,n_5$, respectively,and $ n_1+\dots+n_5=k$. Then
\[
\big(\langle \langle f,g,h \rangle, k,l \rangle + \langle f,\langle g,h,k \rangle,l\rangle + \langle f,g, \langle h,k,l\rangle\rangle\big) (m_j)(a_1,\dots,a_{2k+1})=0,
\]
for all $m_j$ of the basis of $\mathcal{P}^{\text{!}}(2k+1)$.
Now we show, if $f,g,h,k,l$ are elements of degree $n'_1,\dots,n'_5$ respectively, and $ n'_1+\dots+n'_5 =k +1$. Then
\[
\big(\langle \langle f,g,h \rangle, k,l \rangle + \langle f,\langle g,h,k \rangle,l\rangle + \langle f,g, \langle h,k,l\rangle\rangle\big) (m'_j) (a_1,\dots,a_{2k+3})=0,
\]
for all $m'_j$ of the basis of $\mathcal{P}^{\text{!}}(2k+3)$.
Any $m'_j$ is equal to $m_j \circ_i \mu^\ast$, such that $m_j$ is an element of the basis for $\mathcal{P}^{\text{!}}(2k+1)$.
So
\begin{align*}
&
\big(\langle \langle f,g,h \rangle, k,l \rangle + 
\langle f,\langle g,h,k \rangle,l\rangle + 
\langle f,g, \langle h,k,l\rangle\rangle\big) (m'_j)
\\
&= 
\big(\langle \langle f,g,h \rangle, k,l \rangle + 
\langle f,\langle g,h,k \rangle,l\rangle + 
\langle f,g, \langle h,k,l\rangle\rangle\big) (m_j \circ_i \mu^\ast)(a_1,\dots,a_{2k+3})
\\
&=
\big(\langle \langle f,g,h \rangle, k,l \rangle + 
\langle f,\langle g,h,k \rangle,l\rangle 
\\
&\qquad
+ 
\langle f,g, \langle h,k,l\rangle\rangle\big) (m_j)(a_1,\dots, \mu^\ast(a_i,a_{i+1}, a_{i+2}), \dots a_{2k+3}).
\end{align*}
But by induction assumption:
\[
\big(\langle \langle f,g,h \rangle, k,l \rangle + \langle f,\langle g,h,k \rangle,l\rangle + \langle f,g, \langle h,k,l\rangle\rangle\big) (m_j)(a_1,\dots,a_{2k+1}) =0
\]
for any $(a_1,\dots,a_{2k+1})$ in $A^{\otimes 2k+1}$.
So
\begin{align*}
&
\big(\langle \langle f,g,h \rangle, k,l \rangle + 
\langle f,\langle g,h,k \rangle,l\rangle + 
\\
&\qquad
\langle f,g, \langle h,k,l\rangle\rangle\big) (m_j)(a_1,\dots, \mu^\ast(a_i,a_{i+1}, a_{i+2}), \dots a_{2k+3})
= 0.
\end{align*}
The definition of ternary cup product $\langle -,-,- \rangle$ show that, this ternary operation of $C_\mathcal{P}^\bullet(A,A)$ has degree one.
\end{proof}

%%%%%%%%%%%%%%%%%%%%%%%%%%%%%%%%%%%%%%%%%%%%%%%%%%%%%%%%%%%%%%%%%%%%%%%%%%%%%%%%%%%%%%%%%%%%%%%%
%%%%%%%%%%%%%%%%%%%%%%%%%%%%%%%%%%%%%%%%%%%%%%%%%%%%%%%%%%%%%%%%%%%%%%%%%%%%%%%%%%%%%%%%%%%%%%%%
%%%%%%%%%%%%%%%%%%%%%%%%%%%%%%%%%%%%%%%%%%%%%%%%%%%%%%%%%%%%%%%%%%%%%%%%%%%%%%%%%%%%%%%%%%%%%%%%

\section{Quaternary case}

The operad $\tass_0^4$ is generated by quaternary operation
$\mu$ that satisfies:
\[
\mu (\mu, id,id,id)=\mu (id, \mu,id,id)= \mu (id, id,\mu,id)= \mu (id, id,id,\mu).
\]
If $d=2$ and $n=4$, the operad $\pass_2^4$ is generated by quaternary operation $\mu$,
that by Example \ref{passociativeoperation} satisfies:
\[
\mu (\mu, id,id,id) - \mu (id, \mu,id,id) + \mu (id, id,\mu,id)- \mu(id, id,id,\mu)=0.
\]
These two operads were originally studied by Gnedbaye \cite[\S 3.4]{Gnedbaye}.
It follows from his results that if $n$ is even then the free ns operad
generated by one $(n-1)$-ary operation and the partially associative $n$-ary operad $\pass^n_0$
are linearly isomorphic (that is, as weight-graded vector spaces).

\begin{remark}
\label{padimension}
The reason for this phenomenon is that for even $n$, the single relation defining partial
associativity forms a Gr\"obner basis of the operadic ideal it generates,
for the path glex order.
This can be verified by computing a Gr\"obner basis for the Koszul dual operad,
which is the (de)suspension of the totally associative $n$-ary operad.
It follows that the monomials in normal form are precisely those where partial composition is
forbidden in the first position, so only the last $n-1$ partial compositions are permitted.
This gives the required weight-preserving linear isomorphism.
In the odd case, as we have already seen for $n = 3$, the Gr\"obner basis is more complicated;
in fact, the corresponding operad is not Koszul \cite{DMR}.
It follows that the dimensions of the homogeneous spaces in the partially associative quaternary
operad are given by the ternary Catalan numbers (Proposition \ref{Catalanlemma}).
\end{remark}

\begin{lemma} \label{KoszuldualofTA4}
Let $\mathcal{P}$ be a totally associative operad $\tass^4_0$ generated by
the quaternary operation $\mu$ in degree zero.
Then its Koszul cooperad $\mathcal{P}^{\text{!}}$ is the partially associative operad
$\pass^4_2$ generated by quaternary operation $\mu^\ast=(-,-,-,-)$ of degree two,
which is partially associative, so it satisfies the relation:
 \[
\mu^\ast (\mu^\ast, id,id,id) - \mu^\ast (id, \mu^\ast,id,id) + \mu^\ast (id, id,\mu^\ast,id)- \mu^\ast(id, id,id,\mu^\ast)=0.
\]
\end{lemma}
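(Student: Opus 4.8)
The plan is to mimic, for $n=4$, exactly the computation already carried out in Lemma \ref{KoszuldualofTA3} for $n=3$, using the general Koszul duality setup of Definition \ref{Koszuldual} together with Proposition \ref{Koszul of TA} as a consistency check. First I would recall that $\tass^4_0 = \Gamma(E)/(R)$ where $E$ is concentrated in arity $4$ and degree $0$, so that $\Gamma(E)(2n-1) = \Gamma(E)(7)$ has the monomial basis $\mu \circ_1 \mu, \mu \circ_2 \mu, \mu \circ_3 \mu, \mu \circ_4 \mu$ given by \eqref{associationtypes}, and $R$ is spanned by the three total-associativity relations $\mu \circ_i \mu - \mu \circ_{i+1}\mu$ for $1 \le i \le 3$ (Definition \ref{tassociativeoperation}). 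Next I would identify the dual generator: since $a = n = 4$, the suspension shift is $\uparrow^{a-2} = \uparrow^{2}$, so $E^\vee(4)$ is one-dimensional and concentrated in homological degree $n-2 = 2$; hence $\Gamma(E^\vee)$ is generated by the even operation $\mu^\ast$, and $\Gamma(E^\vee)(7)$ carries the parallel monomial basis $\mu^\ast \circ_i \mu^\ast$, $1 \le i \le 4$.

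The computational core is to evaluate the pairing $\langle -,-\rangle \colon \Gamma(E^\vee)(7) \otimes \Gamma(E)(7) \to \mathbb{F}$ defined in the Koszul-duality section, which here takes the form
\[
\langle \mu^\ast \circ_i \mu^\ast,\ \mu \circ_j \mu\rangle = (-1)^{(i+1)(n+1)}\delta_{ij} = (-1)^{5(i+1)}\delta_{ij} = (-1)^{i+1}\delta_{ij},
\]
since $n+1 = 5$ is odd. So on the chosen bases the pairing is the diagonal matrix $\mathrm{diag}(1,-1,1,-1)$. I would then write the two generators of $R$ as coefficient row vectors and, because of the sign twist in the pairing, take the orthogonal complement with respect to this twisted inner product rather than the standard one: concretely, $R^\perp$ is the nullspace of the matrix obtained by post-composing the relation matrix with $\mathrm{diag}(1,-1,1,-1)$. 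With $R$ spanned by $(1,-1,0,0),(0,1,-1,0),(0,0,1,-1)$ (equivalently by $\mu\circ_i\mu - \mu\circ_{i+1}\mu$), one checks that $R^\perp$ is one-dimensional, spanned by the vector whose $i$-th entry is $(-1)^{(i+1)(n-1)} = (-1)^{i+1}$ (using $n-1 = 3$ odd), i.e. $(1,-1,1,-1)$. This yields
\[
\alpha^\ast = \mu^\ast \circ_1 \mu^\ast - \mu^\ast \circ_2 \mu^\ast + \mu^\ast \circ_3 \mu^\ast - \mu^\ast \circ_4 \mu^\ast,
\]
which is precisely the alternating partial-associativity relation of Definition \ref{passociativeoperation} with the signs $(-1)^{(i+1)(n-1)}$ for $n=4$. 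Hence $\mathcal{P}^! = \Gamma(E^\vee)/(R^\perp) = \pass^4_2$, matching Proposition \ref{Koszul of TA} with $d = 0$, $n = 4$, so that $-d+n-2 = 2$.

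The only genuine subtlety — and the step I expect to be the main obstacle — is getting the signs right: both the sign $(-1)^{(i+1)(n+1)}$ built into the operadic pairing and the sign $(-1)^{(i+1)(n-1)}$ appearing in the definition of the partially associative relation depend on the parity of $n$, and for even $n$ these are the nontrivial alternating signs (whereas for $n=3$ they were all $+1$, which is why Lemma \ref{KoszuldualofTA3} produced the plain sum $m_1+m_2+m_3$). I would therefore be careful to track the Koszul sign rule through the double suspension $\uparrow^2$ and to verify that the nullspace computation is performed against the twisted pairing, not the naive dot product; a brief sanity check is that the resulting relation must be invariant (up to scalar) under the cyclic-type symmetry of the four association types and must reduce, after desuspension, to the known generator of $\pass^4_2$ recorded by Gnedbaye \cite[\S3.4]{Gnedbaye}. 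Everything else is a transcription of the $n=3$ argument with $\Gamma(E)(5)$ replaced by $\Gamma(E)(7)$ and the $2\times 3$ relation matrix replaced by the $3\times 4$ one above.
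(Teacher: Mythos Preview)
Your proposal is correct and follows exactly the template of the paper's proof of Lemma \ref{KoszuldualofTA3}; in fact the paper states Lemma \ref{KoszuldualofTA4} without an explicit proof, presumably because it is the $n=4$ instance of Proposition \ref{Koszul of TA} and is entirely parallel to the $n=3$ computation. Your treatment of the sign twist $(-1)^{(i+1)(n+1)}$ in the pairing---which is trivial for $n=3$ but produces the alternating $\mathrm{diag}(1,-1,1,-1)$ for $n=4$---is the one new ingredient, and you handle it correctly: the orthogonality conditions $c_i + c_{i+1}=0$ force $R^\perp$ to be spanned by $(1,-1,1,-1)$, matching the $\pass^4_2$ relation.
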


%%%%%%%%%%%%%%%%%%%%%%%%%%%%%%%%%%%%%%%%%%%%%%%%%%%%%%%%%%%%%%%%%%%%%%%%%%%%%%%%%%%%%%%%%%%%%%%%

\subsection{Quaternary cup product on the cochain complex}

Let $\mathcal{P}$ be the ns operad generated by a totally associative quaternary operation in degree zero
and let $A$ be a $\mathcal{P}$-algebra.
We note that $\mathcal{P}(n) \ne 0$ if and only if $n \equiv 1$ (mod 3).
If we consider $A$ as a module over itself, then by Definition \ref{cochaincomplex},
\[
C_\mathcal{P}^n(A,A) := Hom (\mathcal{P}^{\text{!}^{(3n+1)}}(A),A).
\]
Recall the the following natural isomorphism for operads:
\[
Hom(\mathcal{P}(n),Hom (A^{\otimes n},A) ) \cong Hom(\mathcal{P}(n)\otimes A^{\otimes n},A).
\]

\begin{example}
The first five homogeneous spaces in the cochain complex are
\begin{align*}
C_\mathcal{P}^0(A,A)&= Hom (\mathcal{P}^{\text{!}^{(1)}}(A),A))\cong Hom(\mathcal{P}^{\text{!}}(1),Hom (A,A) ),\\
C_\mathcal{P}^1(A,A)&= Hom (\mathcal{P}^{\text{!}^{(4)}}(A),A))\cong Hom(\mathcal{P}^{\text{!}}(4),Hom (A^{\otimes 4},A) ),\\
C_\mathcal{P}^2(A,A)&= Hom (\mathcal{P}^{\text{!}^{(7)}}(A),A))\cong Hom(\mathcal{P}^{\text{!}}(7),Hom (A^{\otimes 7},A)),\\
C_\mathcal{P}^3(A,A)&= Hom (\mathcal{P}^{\text{!}^{(10)}}(A),A))\cong Hom(\mathcal{P}^{\text{!}}(10),Hom (A^{\otimes 10},A)),\\
C_\mathcal{P}^4(A,A)&= Hom (\mathcal{P}^{\text{!}^{(13)}}(A),A))\cong Hom(\mathcal{P}^{\text{!}}(13),Hom (A^{\otimes 13},A)).
\end{align*}
\end{example}

By Lemma \ref{KoszuldualofTA4}, the dual operation $\mu^{\star}$ is quaternary where
$\mathcal{P}^{\text{!}}(4)= \langle \mu^{\star} \rangle$, and
$\mathcal{P}^{\text{!}}(7)$ is spanned by the monomials $m_1,\,m_2,\, m_3, m_4$ satisfying the partially associativity relation
$m_1 - \,m_2 +\, m_3 - \, m_4 =0$.
So we may choose three of these four operations, say $m_1,\,m_2,\,m_3$ as the basis of $\mathcal{P}^{\text{!}}(7)$.

\begin{example}
\label{chaincomplex4}
If $f \in C_\mathcal{P}^0(A,A)$ then $f \in Lin(A,A)$ since $\dim \mathcal{P}^{\text{!}}(1) = 1$.

If $f \in C_\mathcal{P}^1(A,A)$, then $f(\mu ^{\star}) \in Lin(A^{\otimes 4},A)$ since $\dim \mathcal{P}^{\text{!}}(4) = 1$.
So there are four $A$-module morphisms $f_1, f_2, f_3, f_4 \colon A \rightarrow A$ corresponding to the four components
of $f(\mu ^{\star}) \colon A^{\otimes 4} \rightarrow A$ regarded as a multilinear map:
\[
f(\mu ^{\star})(a_1,a_2,a_3,a_4)= \mu (f_1(a_1), f_2(a_2), f_3(a_3), f_4(a_4)).
\]

If $f \in C_\mathcal{P}^2(A,A)= Hom(\mathcal{P}^{\text{!}}(7),Hom (A^{\otimes 7},A))$,
then since $\dim \mathcal{P}^{\text{!}}(7) = 3$,
we have $f(m_j) \in Lin(A^{\otimes 7},A)$ for $j = 1,2,3$.
So there are seven $A$-module morphisms $f_i: A \rightarrow A,\; i=1,\dots 7$ such that:
\begin{align*}
f(m_1)(a_1,\dots,a_7) &= \mu\Big( \mu\big( f_1(a_1), f_2(a_2), f_3(a_3), f_4(a_4) \big), f_5(a_5), f_6(a_6), f_7(a_7) \Big),
\\
f(m_2)(a_1,\dots,a_7) &= \mu\Big( f_1(a_1), \mu\big( f_2(a_2), f_3(a_3), f_4(a_4), f_5(a_5) \big), f_6(a_6), f_7(a_7) \Big),
\\
f(m_3)(a_1,\dots,a_7) &= \mu\Big( f_1(a_1), f_2(a_2), \mu\big( f_3(a_3), f_4(a_4), f_5(a_5), f_6(a_6) \big), f_7(a_7) \Big).
\end{align*}
\end{example}

\begin{definition}\label{cupproductincochaincomplex2}
Let $C_\mathcal{P}^\bullet(A,A):= Hom (\mathcal{P}^{\text{!}}(A),A)$ be the operadic cochain complex for
the totally associative operad $\mathcal{P}$ generated by the quaternary operation $\mu$.
Let $f,g,h,k$ be four elements of $C_\mathcal{P}^\bullet(A,A)$ with degrees $n_1$, $n_2,n_3$, $n_4$ respectively.
The \emph{quaternary cup product} is a map $\langle -,-,-,-\rangle$ of (homological) degree 2:
\[
\langle -,-,-,-\rangle:
 C_\mathcal{P}^\bullet(A,A) \otimes C_\mathcal{P}^\bullet(A,A) \otimes C_\mathcal{P}^\bullet(A,A)\otimes C_\mathcal{P}^\bullet(A,A) \longrightarrow C_\mathcal{P}^\bullet(A,A).
\]
In order to define the element $\langle f,g,h,k\rangle$ of $C_\mathcal{P}^{r}(A,A)$ where $r=n_1 +n_2 +n_3 + n_4+2$,
we make the following observations:
\begin{itemize}[leftmargin=*]
\item
The elements $f, g, h, k$ are determined by their components, which are $A$-module morphisms $f_i, g_j,h_l,k_t \in Lin( A,A)$, where
\begin{align*}
&
1 \le i \le 3n_1+1,
\\
&
3n_1+2 \le j \le 3n_1+3n_2+2,
\\
&
3n_1+3n_2+3 \le l \le 3n_1+3n_2+3n_3+3,
\\
&
3n_1+3n_2+3n_3+4 \le t \le 3n_1+3n_2+3n_3+3n_4+4.
\end{align*}
Note that
\[
3n_1{+}3n_2{+}3n_3{+}3n_4{+}4 = (3n_1{+}1) + (3n_2{+}1) + (3n_3{+}1) + (3n_4{+}1) = 3r-2.
\]
Thus the factors in the tensor product are indexed as follows:
\[
f_1 \otimes \cdots \otimes
g_{3n_1+2} \otimes \cdots \otimes
h_{3n_1+3n_2+3} \otimes \cdots \otimes
k_{3n_1+3n_2+3n_3+4} \otimes \cdots \otimes k_{3r-2}.
\]
\item
Since any element $\gamma \in \mathcal{P}^{\text{!}}(3r+1)$ can be written in the form $\gamma = \alpha \circ_s \mu^\ast$ where
$\alpha \in \mathcal{P}^{\text{!}}(3r-2)$ and $1 \le s \le 3r-2$, we can define $\langle f,g,h,k\rangle (\gamma)$
inductively by weight.
For each partial composition index $s$, we need to describe the map corresponding to position $s$.
For example, if position $s$ corresponds to tensor factor $k_j$ then
\begin{align*}
&\langle f,g,h,k\rangle (\gamma)(a_1, \dots, a_{3r +1})\\
&= \alpha \circ_s \mu^\ast (f_1 \otimes \cdots f_{3n_1+1}\cdots g_{3n_2+1} \cdots h_{3n_3+1}\cdots k_j^{\otimes 4} \cdots \otimes k_{3r-2})(a_1 \cdots a_{3r +1})\\
&=\alpha \circ_s \mu^\ast\big(f_1(a_1),\dots, k_j(a_i), k_j(a_{i+1}) , k_j(a_{i+2}), k_j(a_{i+3}), \dots k_{3r-2}(a_{3r+1})\big)\\
&=\alpha\Big(f_1(a_1),\dots, \mu\big(k_j(a_i),k_j(a_{i+1}), k_j(a_{i+2}),k_j(a_{i+3})\big),\dots, k_{3r-2}(a_{3r+1})\Big).
\end{align*}
\end{itemize}
\end{definition}

\begin{example}
In the simplest case, let $f,g,h,k$ be four elements of $C_\mathcal{P}^\bullet(A,A)$ of degree 0.
It follows that
\[
\langle f,g,h,k\rangle \in C_\mathcal{P}^2(A,A),
\qquad
\langle f,g,h,k\rangle (\mu^{\star})\in Lin(A^{\otimes 7},A).
\]
Using the basis $\{m_1,m_2,m_3\}$ of $\mathcal{P}^{\text{!}}(7)$, for $s = 1, 2, 3$ we obtain:
\begin{align*}
\langle f,g,h,k\rangle (m_1)(a_1,\dots,a_7)&= \mu^\ast\circ_1 \mu^\ast \big(f^{\otimes 4}\otimes g\otimes h\otimes k\big)(a_1,\dots,a_7)\\
&=\mu \Big( \mu \big(f(a_1), f(a_2), f(a_3),f(a_4)\big), g(a_5), h(a_6), k(a_7)\Big).\\
\langle f,g,h,k\rangle (m_2)(a_1,\dots,a_7)&= \mu^\ast\circ_2 \mu^\ast \big(f\otimes g^{\otimes 4}\otimes h\otimes k\big)(a_1,\dots,a_7)\\
&=\mu \Big(f(a_1), \mu \big( g(a_2), g(a_3),g(a_4), g(a_5)\big), h(a_6), k(a_7)\Big).\\
\langle f,g,h,k\rangle (m_3)(a_1,\dots,a_7)&= \mu^\ast\circ_3 \mu^\ast \big(f\otimes g\otimes g^{\otimes 4}\otimes k\big)(a_1,\dots,a_7)\\
&=\mu \Big(f(a_1), g(a_2), \mu \big( h(a_3),h(a_4), h(a_5), h(a_6)\big), k(a_7)\Big).
\end{align*}
\end{example}

%%%%%%%%%%%%%%%%%%%%%%%%%%%%%%%%%%%%%%%%%%%%%%%%%%%%%%%%%%%%%%%%%%%%%%%%%%%%%%%%%%%%%%%%%%%%%%%%

\subsection{$\pass_2^4$-algebra structure of cochain complex}

\begin{theorem}\label{cochainispartially2}
Let $\mathcal{P}$ be the ns totally associative operad generated by the quaternary operation $\mu$ of (homological) degree 0,
let $A$ be a $\mathcal{P}$-algebra, and let $C_\mathcal{P}^\bullet(A,A)$ be its operadic cochain complex.
Then the quaternary cup product of Definition \ref{cupproductincochaincomplex2} on
$C_\mathcal{P}^\bullet(A,A)$ is a quaternary partially associative product of degree 2.
\end{theorem}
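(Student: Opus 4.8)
The plan is to mimic the structure of the proof of Theorem~\ref{cochainispartially1} for the ternary case, carrying out a double induction: an outer induction on the degree (equivalently, weight) of the arguments in $\mathcal{P}^{\text{!}}$, and within the base case a direct verification using the defining relation $m_1 - m_2 + m_3 - m_4 = 0$ of the Koszul dual operad $\pass_2^4$ (Lemma~\ref{KoszuldualofTA4}). Concretely, one must show that for $f,g,h,k,l,p,q$ elements of $C_\mathcal{P}^\bullet(A,A)$ of degrees $n_1,\dots,n_7$,
\[
\langle\langle f,g,h,k\rangle,l,p,q\rangle
- \langle f,\langle g,h,k,l\rangle,p,q\rangle
+ \langle f,g,\langle h,k,l,p\rangle,q\rangle
- \langle f,g,h,\langle k,l,p,q\rangle\rangle = 0,
\]
as an element of $C_\mathcal{P}^\bullet(A,A)$, i.e. after evaluating on every basis monomial of the appropriate $\mathcal{P}^{\text{!}}(3r+1)$ and every tuple of arguments from $A$.

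First I would treat the base case, where all seven inputs have degree $0$ and hence lie in $\mathrm{Lin}(A,A)$. Then each of the four iterated cup products has degree $2$ and lies in $C_\mathcal{P}^3(A,A) = \mathrm{Hom}(\mathcal{P}^{\text{!}}(10), \mathrm{Hom}(A^{\otimes 10}, A))$, whose relevant basis monomials are obtained by one further partial composition $m_j \circ_i \mu^\ast$. The key combinatorial observation, exactly parallel to the ternary computation, is that evaluating $\langle\langle f,g,h,k\rangle,l,p,q\rangle$ on a fixed monomial $m_1$ of $\mathcal{P}^{\text{!}}(7)$ produces a nested $\mu$-expression that coincides (as a multilinear map of the $a_i$) with evaluating the \emph{single} cup product $\langle f,g,h,k,l,p,q\rangle$ — in a suitable seven-fold sense — on $m_1$; and the other three signed terms produce the evaluations on $m_2$, $m_3$, $m_4$ respectively, with signs matching those in the partial-associativity relation. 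Summing, one gets $(m_1 - m_2 + m_3 - m_4)$ applied to a common argument, which vanishes by Lemma~\ref{KoszuldualofTA4}. I would make this identification precise by unwinding Definition~\ref{cupproductincochaincomplex2}: the outer $\mu^\ast$ composition corresponds to the outermost $\mu$, and whichever of the four slots is itself a cup product supplies the inner $\mu$ sitting in position $1,2,3,4$ of the outer $\mu^\ast$ — which is precisely the distinction between $m_1,m_2,m_3,m_4$.

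For the inductive step, suppose the identity holds for all inputs whose degrees sum to $k$; I would prove it when the degrees sum to $k+1$. As in the ternary proof, any basis monomial $m'_j \in \mathcal{P}^{\text{!}}(3(r{+}1)+1)$ factors as $m_j \circ_i \mu^\ast$ with $m_j$ a basis monomial of $\mathcal{P}^{\text{!}}(3r+1)$, and by Definition~\ref{cupproductincochaincomplex2} evaluating the alternating sum on $m'_j$ at $(a_1,\dots,a_{3r+4})$ equals evaluating it on $m_j$ at the tuple obtained by replacing three consecutive entries $a_i,a_{i+1},a_{i+2},a_{i+3}$ by the single entry $\mu^\ast(a_i,a_{i+1},a_{i+2},a_{i+3})$ — which is itself an element of $A^{\otimes 3r+1}$ (using that $A$ is concentrated in degree $0$ so $\mu$ makes sense on it). By the induction hypothesis the alternating sum already vanishes on $m_j$ for every input tuple, so in particular on this one, giving the result. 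Finally I would record the degree bookkeeping: each cup product raises total degree by $2$, so $\langle -,-,-,-\rangle$ is a homological-degree-$2$ operation, consistent with $\pass_2^4$. I expect the main obstacle to be purely bookkeeping: verifying in the base case that the four nested $\mu$-expressions line up \emph{with the correct alternating signs} against the relation $m_1 - m_2 + m_3 - m_4 = 0$ — the signs come from the Koszul sign rule applied to the degree-shifted operation $\mu^\ast$ and from the signature $(-1)^{(i+1)(n-1)}$ in Definition~\ref{passociativeoperation}, and one must confirm that the way Definition~\ref{cupproductincochaincomplex2} threads the tensor factors through the partial compositions reproduces exactly these signs rather than their negatives or some permutation.
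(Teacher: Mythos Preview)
Your overall strategy---induction on the total degree with the base case reduced to the relation $m_1 - m_2 + m_3 - m_4 = 0$ in $\mathcal{P}^{\text{!}}$---is exactly the paper's approach, and the inductive step is stated correctly. However, the degree and arity bookkeeping in your base case is off and, as written, does not make sense. With all seven inputs in degree $0$, the inner products $\langle f,g,h,k\rangle$ etc.\ land in $C_\mathcal{P}^2(A,A)$, so the four \emph{iterated} products lie in $C_\mathcal{P}^{2+0+0+0+2}(A,A)=C_\mathcal{P}^4(A,A)=\mathrm{Hom}(\mathcal{P}^{\text{!}}(13),\mathrm{Hom}(A^{\otimes 13},A))$, not in $C_\mathcal{P}^3(A,A)$ or on $\mathcal{P}^{\text{!}}(7)$. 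Consequently the base-case monomials $\beta$ are obtained from $m_j\in\mathcal{P}^{\text{!}}(7)$ by \emph{two} further partial compositions with $\mu^\ast$ (e.g.\ the paper checks $\beta=(m_1\circ_1\mu^\ast)\circ_9\mu^\ast$), not one, and you never evaluate the alternating sum directly on $m_1\in\mathcal{P}^{\text{!}}(7)$.

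Once you correct this, the mechanism you describe is exactly what the paper does: for a fixed $\beta\in\mathcal{P}^{\text{!}}(13)$ built from $m_1$, the four signed terms of the partial-associativity relation unwind (via Definition~\ref{cupproductincochaincomplex2}) to the same outer cup product evaluated on the four monomials built analogously from $m_1,m_2,m_3,m_4$, and the relation $m_1-m_2+m_3-m_4=0$ kills the sum. So the gap is purely arithmetical rather than conceptual; fixing the degree count recovers the paper's proof verbatim.
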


\begin{proof}
We need to show that
\begin{equation}
\label{tobeproved}
\begin{array}{l}
\langle \langle f,g,h,k \rangle, l,u,v \rangle
- \langle f,\langle g,h,k,l \rangle,u,v\rangle
\\[1mm]
{}
+ \langle f,g, \langle h,k,l,u\rangle,v\rangle
- \langle f,g,h, \langle k,l,u,v\rangle = 0,
\end{array}
\end{equation}
for arbitrary elements $f,\dots,v$ in $C_\mathcal{P}^\bullet(A,A)$ of degrees $n_1,\dots,n_7$, respectively.
We proceed by induction on the sum $n_1 + \cdots + n_7$.

For the basis, we verify the simplest case in which $f,\dots,v$ have degree 0: that is, they are in $Lin(A,A)$.
The four interior products $\langle f,g,h,k \rangle$, $\langle g,h,k,l \rangle$, $\langle h,k,l,u \rangle$, $ \langle k,l,u,v \rangle$
have degree 2; that is, they are in $C_\mathcal{P}^2(A,A)$.
The four terms in relation \eqref{tobeproved} have degree 4; that is, they are in $C_\mathcal{P}^4(A,A)$.
By Example \ref{chaincomplex4}, for every basis monomial $\beta \in \mathcal{P}^{\text{!}}(13)$, we have
\[
\langle \langle f,g,h,k \rangle, l,u,v \rangle (\beta):A^{\otimes 13} \longrightarrow A.
\]
Therefore it is enough to verify the relation for every basis monomial $\beta$; that is,
\begin{align*}
&
\langle \langle f,g,h,k \rangle, l,u,v \rangle (\beta)
- \langle f,\langle g,h,k,l \rangle,u,v\rangle (\beta)
\\
&\quad
+ \langle f,g, \langle h,k,l,u\rangle,v\rangle (\beta)
- \langle f,g,h \langle k,l,u,v\rangle (\beta) = 0.
\end{align*}

We work out the details for the monomial $\beta = ( m_1 \circ_1 \mu^\ast ) \circ_9 \mu^\ast$
obtained by applying partial compositions to $m_1$:
\begin{align*}s
&
\big(\langle \langle f,g,h,k \rangle, l,u,v \rangle (\beta) \big)(a_1,\dots,a_{13})
=
\beta(\langle f,g,h,k \rangle \otimes l\otimes u \otimes v)(a_1,\dots,a_{13})
\\
&=
\big( (m_1\circ_1\mu^\ast)\circ_9 \mu^\ast \big) (\langle f,g,h,k \rangle \otimes l\otimes u \otimes v)(a_1,\dots,a_{13})
\\
&=
\big( (m_1\circ_1\mu^\ast)\circ_9 \mu^\ast \big)(f^{\otimes 4}\otimes g \otimes h \otimes k \otimes l\otimes u^{\otimes 4} \otimes v)(a_1,\dots,a_{13}),
\\
&
\big(\langle f,\langle g,h,k,l \rangle,u,v\rangle (\beta) \big)(a_1,\dots,a_{13})
=
\beta(f\otimes \langle g,h,k,l\rangle \otimes u \otimes v)(a_1,\dots,a_{13})
\\
&=
\big( (m_1\circ_1\mu^\ast)\circ_9 \mu^\ast \big) (f\otimes \langle g,h,k,l\rangle \otimes u \otimes v)(a_1,\dots,a_{13})
\\
&=
\big( (m_1\circ_1\mu^\ast)\circ_9 \mu^\ast \big)(f\otimes g^{\otimes 4} \otimes h \otimes k \otimes l\otimes u^{\otimes 4} \otimes v)(a_1,\dots,a_{13}),
\\
&
\big(\langle f,g, \langle h,k,l,u \rangle,v\rangle (\beta) \big)(a_1,\dots,a_{13})
=
\beta(f\otimes g \otimes \langle h,k,l,u\rangle \otimes v)(a_1,\dots,a_{13})
\\
&=\big( (m_1\circ_1\mu^\ast)\circ_9 \mu^\ast \big) (f\otimes g \otimes \langle h,k,l,u\rangle \otimes v)(a_1,\dots,a_{13})
\\
&=\big( (m_1\circ_1\mu^\ast)\circ_9 \mu^\ast \big)(f \otimes g \otimes h^{\otimes 4} \otimes k \otimes l\otimes u^{\otimes 4} \otimes v)(a_1,\dots,a_{13}),
\\
&
\big(\langle f,g,h, \langle k,l,u,v\rangle \rangle(\beta) \big)(a_1,\dots,a_{13})
=
\beta(f \otimes g \otimes h \otimes \langle k,l,u,v \rangle)(a_1,\dots,a_{13})
\\
&=
\big( (m_1\circ_1\mu^\ast)\circ_9 \mu^\ast \big) (f \otimes g \otimes h \otimes \langle k,l,u,v \rangle)(a_1,\dots,a_{13})
\\
&=
\big( (m_1\circ_1\mu^\ast)\circ_9 \mu^\ast \big)(f\otimes g \otimes h \otimes k^{\otimes 4} \otimes l\otimes u^{\otimes 4} \otimes v)(a_1,\dots,a_{13}).
\end{align*}
The second, third, and fourth results in this list may also be written respectively as follows using $m_2, m_3, m_4$ instead of $m_1$:
\begin{align*}
\langle\langle f,g,h,k \rangle,l,u,v \rangle ((m_2\circ_1\mu^\ast)\circ_9 \mu^\ast)(a_1,\dots,a_{13}),
\\
\langle\langle f,g,h,k \rangle,l,u,v \rangle ((m_3\circ_1\mu^\ast)\circ_9 \mu^\ast)(a_1,\dots,a_{13}),
\\
\langle\langle f,g,h,k \rangle,l,u,v \rangle ((m_4\circ_1\mu^\ast)\circ_9 \mu^\ast)(a_1,\dots,a_{13}).
\end{align*}
On the other hand, Lemma \ref{KoszuldualofTA4} shows that $m_1- m_2 + m_3 - m_4=0$ and hence
\begin{align*}
&
\Big(
 \langle \langle f, g, h, k \rangle, l, u, v \rangle (\beta)
- \langle f, \langle g, h, k, l \rangle, u, v \rangle (\beta)
\\
&\quad
+ \langle f, g, \langle h, k, l, u \rangle, v \rangle (\beta)
- \langle f, g, h, \langle k, l, u, v \rangle \rangle (\beta)
\Big)
(a_1,\dots,a_{13})
\\
&=
\beta \big( \langle\langle f,g,h,k \rangle,l,u,v \rangle \big)
\Big(\big((m_1\circ_1\mu^\ast)\circ_9 \mu^\ast\big) - \big((m_2\circ_1\mu^\ast)\circ_9 \mu^\ast\big)\\
&\quad
+ \big((m_3\circ_1\mu^\ast)\circ_9 \mu^\ast\big) - \big((m_4\circ_1\mu^\ast)\circ_9 \mu^\ast\big)\Big)(a_1,\dots,a_{13})
\\
&=
\beta \big( \langle\langle f,g,h,k \rangle,l,u,v \rangle \big)
\big(( (m_1-m_2+m_3-m_4) \circ_1\mu^\ast)\circ_9 \mu^\ast\big) (a_1,\dots,a_{13})
=0.
\end{align*}
This establishes the basis of the induction.

For the inductive step, we assume that
if $f$, $g$, $h$, $k$, $l$, $u$, $v$ are elements of degrees $n_1$, \dots, $n_7$ 
respectively such that $n_1 + \cdots + n_7 = r$,
then we have the relation
\begin{align*}
&
\big(\langle \langle f,g,h,k \rangle, l,u,v \rangle
- \langle f,\langle g,h,k,l \rangle,u,v\rangle
\\
&\quad
+ \langle f,g, \langle h,k,l,u\rangle,v\rangle
- \langle f,g,h, \langle k,l,u,v\rangle \big)(\alpha)=0,
\end{align*}
for all basis monomials $\alpha$ in $\mathcal{P}^{\text{!}}(3r+1)$.
We show that if the sum of the degrees of $f,\dots,v$ is $r+1$, then we have the relation
\begin{align*}
&
\big(\langle \langle f,g,h,k \rangle, l,u,v \rangle
- \langle f,\langle g,h,k,l \rangle,u,v\rangle
\\
&\quad
+ \langle f,g, \langle h,k,l,u\rangle,v\rangle
- \langle f,g,h, \langle k,l,u,v\rangle \big)(\beta)=0,
\end{align*}
for all basis monomials $\beta$ in $\mathcal{P}^{\text{!}}(3r+4)$.
Every such monomial has the form $\beta = \alpha \circ_i \mu^\ast$ where
$\alpha$ is a basis monomial for $\mathcal{P}^{\text{!}}(3r+1)$.
To simplify notation, we omit commas separating arguments and write
\[
\rho =
  \langle \langle f g h k \rangle l u v \rangle
- \langle f \langle g h k l \rangle u v \rangle
+ \langle f g \langle h k l u \rangle v \rangle
- \langle f g h \langle k l u v \rangle \rangle.
\]
We calculate as follows:
\begin{align*}
\rho( \beta )(a_1,\dots,a_{3r+4})
&=
\rho( \alpha \circ_i \mu^\ast )(a_1,\dots,a_{3r+4})
\\
&=
\rho( \alpha ) \big(a_1,\dots, \mu^\ast(a_i,a_{i+1}, a_{i+2}, a_{i+3}), \dots a_{3r+4}\big).
\end{align*}
By the inductive hypothesis, we have
$\rho( \alpha )( a_1,\dots,a_{3r+1} ) = 0$
for any $a_1, \dots, a_{3r+1}$.
Therefore
\[
\rho( \alpha ) \big( a_1,\dots, \mu^\ast(a_i,a_{i+1}, a_{i+2}, a_{i+3}), \dots a_{3r+4} \big ) = 0,
\]
for any $a_1, \dots, a_{3r+4}$.
To complete the proof, we note that it follows trivially from the the definition of
the cup product $\langle -,-,-,- \rangle$ that this quaternary operation on
$C_\mathcal{P}^\bullet(A,A)$ has (homological) degree two.
\end{proof}


\begin{thebibliography}{99}

\bibitem{Ataguema}
H. Ataguema, A. Makhlouf:
Notes on cohomologies of ternary algebras of associative type. 
J. Gen. Lie Theory Appl. 3 (2009), no.~3, 157--174.

\bibitem{Bremner}
M. R. Bremner:
On free partially associative triple systems.
Comm. Algebra 28 (2000), no.~4, 2131--€"2145.

\bibitem{BD-AO}
M. R. Bremner, V. Dotsenko:
Algebraic Operads: An Algorithmic Companion. 
CRC Press, Boca Raton, FL, 2016.

\bibitem{Carlsson-1}
R. Carlsson:
Cohomology of associative triple systems. 
Proc. Amer. Math. Soc. 60 (1976) 1--7.
Erratum and supplement:  
Proc. Amer. Math. Soc. 67 (1977), no.~2, 361.

\bibitem{Carlsson-2}
R. Carlsson:
$n$-ary algebras. 
Nagoya Math. J. 78 (1980) 45--56.

\bibitem{DMR}
V. Dotsenko, M. Markl, E. Remm:
Non-Koszulness of operads and positivity of Poincar\'e series.
\url{arXiv:1604.08580 [math.KT]}
(submitted on 28 April 2016).

\bibitem{Gerstenhaber}
M. Gerstenhaber:
The cohomology structure of an associative ring.
Ann. of Math.
(2) 78 (1963) 267--288.

\bibitem{GV}
M. Gerstenhaber, A. A. Voronov:
Higher-order operations on the Hochschild complex.
Funktsional. Anal. i Prilozhen. 29 (1995), no. 1, 1--6, 96.
Translation:
Funct. Anal. Appl. 29 (1995), no. 1, 1--5.

\bibitem{Getzler}
E. Getzler:
Batalin-Vilkovisky algebras and two-dimensional topological field theories. 
Comm. Math. Phys. 159, 2 (1994) 265--285.

\bibitem{Gnedbaye}
A. V. Gnedbaye:
Op\'erades des alg\`ebres $(k{+}1)$-aires
[Operads of $(k{+}1)$-ary algebras].
Operads: Proceedings of Renaissance Conferences (Hartford, CT/Luminy, 1995), 83--113.
Contemp. Math., 202, Amer. Math. Soc., Providence, RI, 1997.

\bibitem{GR}
N. Goze, E. Remm:
Dimension theorem for free ternary partially associative algebras and applications.
J. Algebra 348 (2011), 14â€--36.

\bibitem{Hestenes}
M. R. Hestenes:
A ternary algebra with applications to matrices and linear transformations. 
Arch. Rational Mech. Anal. 11 (1962) 138--194.

\bibitem{Hochschild}
G. Hochschild:
On the cohomology groups of an associative algebra. 
Ann. of Math. (2) 46 (1945) 58--67.

\bibitem{Lister}
W. G. Lister:
Ternary rings. 
Trans. Amer. Math. Soc. 154 (1971) 37--55. 

\bibitem{Loday}
J. L. Loday :
Cup-product for Leibniz cohomology and dual Leibniz algebras. 
Math. Scand. 77 (1995), no. 2, 189--196. 

\bibitem{LV}
J.-L. Loday, B. Vallette:
Algebraic Operads.
Grundlehren Math. Wiss., 346.
Springer, Heidelberg, 2012.

\bibitem{Markl-1}
M. Markl:
Models for operads.
Comm. Algebra
24 (1996), no.~4, 1471--1500.

\bibitem{Markl-2}
M. Markl:
Cohomology operations and the Deligne conjecture.
Czechoslovak Math. J.
57 (2007), no.~1, 473--503.

\bibitem{MR-1}
M. Markl, E. Remm:
Operads for n-ary algebras: calculations and conjectures.
Arch. Math. (Brno) 47 (2011), no. ~5, 377--387.

\bibitem{MR-2}
M. Markl, E. Remm:
(Non-)Koszulness of operads for $n$-ary algebras, galgalim and other curiosities.
J. Homotopy Relat. Struct.
10 (2015), no.~4, 939--969.

\bibitem{MSS}
M. Markl, S. Shnider, J. Stasheff:
Operads in Algebra, Topology and Physics.
Mathematical Surveys and Monographs, 96.
American Mathematical Society, Providence, RI, 2002.

\bibitem{MS-D}
J. E. McClure, J. H. Smith:
A solution of Deligne's Hochschild cohomology conjecture. 
Recent Progress in Homotopy Theory (Baltimore, MD, 2000), 153--193.
Contemp. Math., 293, Amer. Math. Soc., Providence, RI, 2002.

\bibitem{Shukla}
U. Shukla:
Cohomologie des alg\`ebres associatives.
Ann. Sci. \'Ecole Norm. Sup. (3) 78 (1961) 163--209. 

\bibitem{Weibel}
C. A. Weibel:
History of homological algebra. 
History of Topology, 797--836.
North-Holland, Amsterdam, 1999.

\end{thebibliography}
\end{document}